\numberwithin{equation}{section}
 \newtheorem{lemma}{Lemma}[section]
 \newtheorem{proposition}[lemma]{Proposition}
 \newtheorem{theorem}{Theorem}[section]
 \theoremstyle{remark}
 \newtheorem{remark}{Remark}[section]
\numberwithin{equation}{section}
\begin{document}

%--------------------------------------------------------------------------------------
%--------------------------------------------------------------------------------------
\title{\bf Radially Symmetric Stationary Wave for the Exterior Problem of Two-dimensional Burgers Equation}
\author{{\bf Huijiang Zhao}\footnote{Email address: hhjjzhao@whu.edu.cn}\quad and\quad {\bf Qingsong Zhao}\footnote{Email address: qszhao@whu.edu.cn}\\[1mm]
School of Mathematics and Statistics, Wuhan University, China\\
Hubei Key Laboratory of Computational Science, Wuhan University, China}

\date{}

\maketitle

% \thispagestyle{style}                             % 当前页的页面式样, style=empty 当前页不显示页眉页脚, style=first

%\begin{center}
%{\bf Dedicated to Professor Shuxing Chen on the occasion of his 80th birthday}
%\end{center}

\begin{abstract}
We are concerned with the radially symmetric stationary wave for the exterior problem of two-dimensional Burgers equation. A sufficient and necessary condition to guarantee the existence of such a stationary wave is given and it is also shown that such a stationary wave satisfies nice decay estimates and is time-asymptotically nonlinear stable under radially symmetric perturbation.

\end{abstract}

\maketitle

%\tableofcontents

%---------------------------------------------------------------------------------------
\section{Introduction and main results}

In this paper, we consider the problem on the precise description of the large time behaviors of global smooth solutions to the following initial-boundary value problem of multidimensional Burgers equation in an exterior domain $\Omega:=\mathbb{R}^n\backslash \overline{\mathbb{B}}_{r_0}(0)\subset\mathbb{R}^n$ for $n\geq 2$:
\begin{eqnarray}\label{1.1}
   {\bf u}_t+({\bf u}\cdot\nabla){\bf u}&=&\mu\Delta {\bf u}, \quad t>0,\ x\in \Omega,\nonumber\\
   {\bf u}(0,x)&=&{\bf u}_0(x),\quad x\in\Omega,\\
   {\bf u}(t,x)&=&{\bf b}(t,x),\quad t>0,\ x\in\partial\mathbb{B}_{r_0}(0),\nonumber
\end{eqnarray}
and, as in \cite{Hashimoto-NonliAnal-2014, Hashimoto-OsakaJMath-2016, Hashimoto-Matsumura-JDE-2019}, our main purpose is to understand how the space dimension $n$ effect the large time behaviors of solutions of the initial-boundary value problem \eqref{1.1}. Here ${\bf u}=\left(u_1(t,x),\cdots,u_n(t,x)\right)$ is a vector-valued unknown function of $t\in\mathbb{R}^+$ and $x=(x_1,\cdots,x_n)\in \mathbb{R}^n$, ${\bf u}\cdot\nabla=\sum\limits_{j=1}^nu_j\frac{\partial}{\partial x_j}$, $\mu>0$ and $r_0>0$ are some given constants. ${\bf u}_0(x)$ and ${\bf b}(t,x)$ are given initial and boundary values respectively satisfying the compatibility condition ${\bf b}(0,x)={\bf u}_0(x)$ for all $x\in\partial\mathbb{B}_{r_0}(0)$.

Throughout this paper, we will concentrate on the radially symmetric solutions for the initial-boundary value problem \eqref{1.1}. For such a case, under the assumption that ${\bf b}(t,x)=\frac{x}{|x|}v_-, {\bf u}_0(x)=\frac{x}{|x|}v_0(|x|)$ satisfying $\lim\limits_{|x|\to+\infty}v_0(|x|)=v_+$ and $v_0(r_0)=v_-$ for some given constants $v_\pm\in\mathbb{R}$ with $v_0(|x|)$ being some given scalar function, then if we introduce a new unknown function $v(t,r)$ by letting ${\bf u}(t,x)=\frac xrv(t,r)$ with $r=|x|$, we can deduce that $v(t,r):= v(t,|x|)$ solves the following initial-boundary value problem
\begin{eqnarray}\label{1.2}
    v_t+\left(\frac{v^2}{2}\right)_r&=&\mu\left(v_{rr}+\left(\frac {(n-1)v}{r}\right)_r\right),\quad t>0,\ r>r_0,\nonumber\\
    v(t,r_0)&=&v_-,\quad \lim\limits_{r\rightarrow \infty }v(t,r)=v_+,\quad t>0,\\
    v(0,r)&=&v_0(r),\quad r>r_0,\nonumber
\end{eqnarray}
where the initial data $v_0(r)$ is assumed to satisfy the compatibility condition
\begin{equation}\label{Compatibility condition}
v_0(r_0)=v_-,\quad \lim\limits_{r\to\infty}v_0(r)=v_+.
\end{equation}
Throughout the rest of this paper, we set $V_-=v_--\frac{\mu(n-1)}{r_0}$.

It is well-known that, cf. \cite{Liu-Matsumura-Nishihara-SIMA-1998, Liu-Nishihara-JDE-1997, Liu-Yu-ARMA-1997, Matsumura-MAA-2001, Nishihara-JMAA-2001, Nishihara-ADC-2001} and the references cited therein, to give a precise description of the large time behaviors of global solutions $v(t,r)$ to the initial-boundary value problem \eqref{1.2}, in addition to the rarefaction waves and viscous shock waves, which are sufficient to describe the asymptotics of the global solutions for the corresponding Cauchy problem of one-dimensional scalar viscous conservation laws, a new type of nonlinear wave, i.e. the so-called {\it stationary wave} $\phi(r)$ solving the following problem
\begin{eqnarray}\label{1.4}
\frac{d}{dr}\left(\frac{\phi^2}{2}\right)&=&\mu\left(\phi_{rr}+\left(\frac{(n-1)\phi}{r}\right)_r\right),\quad r>r_0,\\
\phi(r_0)&=&v_-,\ \lim\limits_{r\to\infty}\phi(r)=v_+,\nonumber
\end{eqnarray}
should be introduced, which is due to the appearance of the boundary condition \eqref{1.2}$_2$.

The main purpose of this paper focuses on the existence and time-asymptotically nonlinear stability of such a stationary wave $\phi(r)$ under radially symmetric perturbation. To this end, if one integrates \eqref{1.4}$_1$ with respect to $r$ from $r$ to $\infty,$ then the problem \eqref{1.4} is rewritten as
\begin{eqnarray}\label{1.5}
    \frac{d\phi}{dr}+\frac{n-1}{r}\phi&=&\frac1{2\mu}\left(\phi^2-v_+^2\right), \quad r>r_0,\\
    \phi(r_0)&=&v_-, \ \lim\limits_{r\to\infty}\phi(r)=v_+.\nonumber
\end{eqnarray}
Moreover, if we set
\begin{equation}\label{1.6}
\psi(r)=\phi(r)-\frac{\mu(n-1)}{r},
\end{equation}
then we can get from \eqref{1.5} that $\psi(r)$ solves
\begin{eqnarray}\label{1.7}
\frac{d\psi}{dr}&=&\frac1{2\mu}\left(\psi^2-v_+^2\right)-\frac{\mu(n-1)(n-3)}{2r^2}, \quad r>r_0,\nonumber\\
\psi(r_0)&=&V_-:=v_--\frac{\mu(n-1)}{r_0}, \ \lim\limits_{r\to+\infty}\psi(r)=v_+.
\end{eqnarray}

Such a problem has been studied by I. Hashimoto and A. Matsumura in \cite{Hashimoto-NonliAnal-2014, Hashimoto-OsakaJMath-2016, Hashimoto-Matsumura-JDE-2019}, what they found for the multidimensional Burgers equation is that, unlike the one-dimensional case, the stationary wave $\phi(r)$ (or $\psi(r)$) satisfying \eqref{1.5} (or \eqref{1.7}) is, generally speaking, no longer monotonic. The results obtained in \cite{Hashimoto-NonliAnal-2014, Hashimoto-OsakaJMath-2016, Hashimoto-Matsumura-JDE-2019} can be summarized as in the following:
\begin{itemize}
\item When $v_+=0$, \eqref{1.5}$_1$ is Bernoulli type ordinary differential equation which can be solved explicitly, thus one can deduce that for $n=2$, \eqref{1.5} admits the following unique nontrivial solution $\phi_1^2(r)$
    \begin{equation}\label{1.8}
    \phi_1^2(r)=\frac{v_-}{r\left[\frac{1}{r_0}-\frac{v_-}{2\mu}\ln\frac{r}{r_0}\right]}
    \end{equation}
    for $r\geq r_0$ if and only if $v_-<0$, while for $n\geq 3$, \eqref{1.5} possesses the following unique nontrivial solution
    \begin{equation}\label{1.9}
    \phi_1^n(r)=\frac{v_-}{\left(1-\frac{r_0v_-}{2\mu(n-2)}\right)\left(\frac{r}{r_0}\right)^{n-1}
    +\frac{r_0v_-}{2\mu(n-2)}\frac{r}{r_0}}
    \end{equation}
    for $r\geq r_0$ if and only if $v_-\leq \frac{2\mu(n-2)}{r_0}$. These stationary waves $\phi_1^2(r)$ and $\phi_1^n(r) (n\geq 3)$ are monotonic and satisfy
    \begin{equation}\label{1.10}
    \left|\phi_1^n(r)\right|\lesssim \left\{
    \begin{array}{rl}
    \frac{1}{r|\ln r|},\quad& n=2,\\[2mm]
    r^{1-n},\quad& n\geq 3
    \end{array}
    \right.
    \end{equation}
    for $r\geq r_0$ and are time-asymptotic nonlinear stability, cf. \cite{Hashimoto-NonliAnal-2014, Hashimoto-OsakaJMath-2016, Hashimoto-Matsumura-JDE-2019}. Here and in the rest of this paper $f(r)\lesssim g(r)$ means that there exists a generic positive constant $C$ such that $|f(r)|\leq C|g(r)|$ holds for $r\geq r_0$. $f(r)\sim g(r)$ if $f(r)\lesssim g(r)$ and $g(r)\lesssim f(r)$;

\item When $n=3$, \eqref{1.7}$_1$ is an autonomous ordinary differential equation and,
similar to the one-dimensional case, it can be solved explicitly also. In fact, it is  easy to see that \eqref{1.7} admits a unique nontrivial solution
\begin{equation}\label{1.11}
\psi^S(r)=\frac{v_+\left(1-\frac{|v_+|+V_-}{|v_+|-V_-}\exp\left(-\frac{|v_+|(r-r_0)}{\mu}\right)\right)} {1+\frac{|v_+|+V_-}{|v_+|-V_-}\exp\left(-\frac{|v_+|(r-r_0)}{\mu}\right)}
\end{equation}
for $r\geq r_0$ if and only if $v_+<0, V_-<|v_+|$. Such a stationary wave $\psi^S(r)$ satisfies
$$
\left|\psi^S(r)-v_+\right|\lesssim \exp\left(-\frac{|v_+|r}{\mu}\right),\quad r\geq r_0
$$
and is also shown to be nonlinear stable under radially symmetric perturbation, cf. \cite{Hashimoto-Matsumura-JDE-2019};

\item For the case when one can not deduce an explicit formula for the solutions of \eqref{1.5} or \eqref{1.7}, the result available up to now focuses on the case $n\geq 4$. In such a case, it is shown in \cite{Hashimoto-Matsumura-JDE-2019} that if
\begin{equation}\label{1.12}
v_+<0, V_-\leq|v_+|,
\end{equation}
then one can deduce that $\psi^S(r)$ is a upper bound of the solution $\psi(r)$ of \eqref{1.7}, while
\begin{eqnarray}\label{1.13}
\psi_S^n(r)&=&v_++\left(V_--v_+\right)\exp\left(-\frac{|v_+|(r-r_0)}{\mu}\right)\\
&&-\frac{\mu(n-1)(n-3)}{2}\int_{r_0}^r\frac1{s^2}\exp\left(-\frac{|v_+|(r-s)
     }{\mu}\right)ds,\quad r\geq r_0\nonumber
\end{eqnarray}
gives the lower bound, from which one yield the existence of stationary wave $\psi(r)$ to \eqref{1.7}, which satisfies
\begin{equation}\label{1.14}
\left|\psi(r)-v_+\right|\lesssim r^{-2}
\end{equation}
for $r\geq r_0$. Although it is no longer monotonic, its nonlinear stability is justifies in \cite{Hashimoto-Matsumura-JDE-2019} for $v_\pm<0, V_-<v_+$ and is later extended in \cite{Yang-Zhao-Zhao-2019} to cover the case when \eqref{1.12} holds.
\end{itemize}

Even so, for the two-dimensional case, to the best of our knowledge, the only result available up to now is on the case when $v_+=0$ and in such a case, the unique solution $\phi_1^2(r)$ to \eqref{1.5} is given by \eqref{1.8}. Thus it is an interesting problem to see what happens when $v_+<0$ and the main purpose of this paper is concentrated on such a problem.

Throughout the rest of this paper, we will focus on the case $n=2$ and in such a case, \eqref{1.7} can be rewritten as
\begin{eqnarray}\label{1.15}
\frac{d\psi}{dr}&=&\frac1{2\mu}\left(\psi^2-v_+^2+\frac{\mu^2}{r^2}\right), \quad r>r_0,\nonumber\\
\psi(r_0)&=&V_-:=v_-+\frac{\mu}{r_0}, \ \lim\limits_{r\to+\infty}\psi(r)=v_+.
\end{eqnarray}

For the solvability of the problem \eqref{1.15}, we have the following result

\begin{theorem} \label{Thm1.1} There exists a constant $a_*$ satisfying
\begin{equation}\label{1.16}
a_*\in \left\{
\begin{array}{ll}
\left(\sqrt{v_+^2-\frac{\mu^2}{r_0^2}},|v_+|\right),\quad &{\textrm{if}}\ \ v_+\leq -\frac{\mu}{r_0},\\[2mm]
\left[-\frac\mu{r_0},|v_+|\right),\quad & {\textrm{if}}\ \ -\frac\mu{r_0}<v_+<0
\end{array}
\right.
\end{equation}
such that \eqref{1.15} admits a unique solution $\psi(r)\in C^\infty([r_0,+\infty))$ if and only if
\begin{equation}\label{1.17}
v_+<0,\ \ V_-<a_*.
\end{equation}
Moreover such a solution $\psi(r)$ satisfies
\begin{equation}\label{1.18}
\left|\psi(r)-v_+\right|=\left|\phi(r)-v_+-\frac\mu r\right|\leq O(1)r^{-2},\quad r\geq r_0.
\end{equation}
Here $\phi(r)$ is the corresponding solution of the following problem
\begin{eqnarray}\label{1.19}
    \frac{d\phi}{dr}+\frac{\phi}{r}&=&\frac1{2\mu}\left(\phi^2-v_+^2\right), \quad r>r_0,\\
    \phi(r_0)&=&v_-, \ \lim\limits_{r\to\infty}\phi(r)=v_+.\nonumber
\end{eqnarray}
\end{theorem}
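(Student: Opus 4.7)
\emph{Shooting and partition.} My plan is to analyze \eqref{1.15} via a one-parameter shooting argument. For each $a\in\mathbb{R}$ let $\psi_a$ be the maximal forward solution of \eqref{1.15}$_1$ with $\psi_a(r_0)=a$; local existence, uniqueness, and strict monotonicity $a_1<a_2\Rightarrow\psi_{a_1}(r)<\psi_{a_2}(r)$ follow from standard ODE theory. Comparing with the autonomous limit equation $\psi'=(\psi^2-v_+^2)/(2\mu)$, whose only equilibria are $\pm|v_+|$, one shows that any bounded global solution must converge to one of $\pm|v_+|$, yielding a partition $\mathbb{R}=A\sqcup B\sqcup C$ with $A$ (global, $\psi_a\to v_+$), $B$ (finite-$r$ blow-up), $C$ (global, $\psi_a\to|v_+|$). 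Monotonicity forces $A$ downward-closed and $B$ upward-closed; the identity $(\psi_{a_2}-\psi_{a_1})'=\tfrac{1}{2\mu}(\psi_{a_2}+\psi_{a_1})(\psi_{a_2}-\psi_{a_1})$ combined with the positivity of $\psi_{a_2}+\psi_{a_1}$ near $|v_+|$ forces $|C|\le 1$. Non-emptiness of $A$ and $B$ is immediate (very negative $a$ drives $\psi^2>v_+^2$ and $\psi_a$ ascends monotonically to $v_+$; very positive $a$ gives standard Riccati blow-up), so $a_*:=\sup A=\min C$ is well defined.

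\emph{Sharp bounds on $a_*$.} For the upper bound $a_*<|v_+|$: at $a=|v_+|$ one has $\psi_a'(r_0)=\mu/(2r_0^2)>0$, and comparison with the autonomous solution $\equiv|v_+|$ gives $\psi_a\geq|v_+|$, so that $\psi_a-|v_+|$ grows exponentially via $\psi_a'\geq(|v_+|/\mu)(\psi_a-|v_+|)$ and blows up in finite $r$ by a Riccati-type estimate. For the lower bound when $v_+\leq-\mu/r_0$, take $a_0=\sqrt{v_+^2-\mu^2/r_0^2}$; then $\psi_{a_0}'(r_0)=0$, and the curve $r\mapsto\sqrt{v_+^2-\mu^2/r^2}$ serves as an upper supersolution confining $\psi_{a_0}$ to the region $v_+<\psi<\sqrt{v_+^2-\mu^2/r^2}$ whose lower nullcline $-\sqrt{v_+^2-\mu^2/r^2}$ is attracting and tends to $v_+$; one verifies $\psi_{a_0}\to v_+$, so $a_0\in A$ and $a_*>a_0$. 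In the regime $-\mu/r_0<v_+<0$ the constant $-\mu/r_0$ plays the analogous role, giving $a_*\geq-\mu/r_0$.

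\emph{Necessity of $v_+<0$, decay, and the main obstacle.} For $v_+\geq0$ the observation $\psi'|_{\psi=v_+}=\mu/(2r^2)>0$ shows the flow only crosses $v_+$ upward; the linearised asymptotic equation $\chi'\sim(v_+/\mu)\chi+\mu/(2r^2)$ then admits (for $v_+>0$) only a singular $\to 0$ trajectory which, by backward continuation, blows down to $-\infty$ at some finite $r_*>0$, precluding any matching with $V_-$; the $v_+=0$ case is ruled out by the explicit lower bound $\psi(r)\geq V_-+\mu/(2r_0)-\mu/(2r)$ which is inconsistent with $\psi\to 0$ unless the solution blows up first. For the decay estimate, set $\chi=\psi-v_+$ and rewrite \eqref{1.15}$_1$ as $\chi'+(|v_+|/\mu)\chi=\chi^2/(2\mu)+\mu/(2r^2)$; the integrating factor $e^{|v_+|r/\mu}$ produces the representation
\[
\chi(r)=e^{|v_+|r/\mu}\int_r^\infty e^{-|v_+|s/\mu}\Bigl(\frac{\mu}{2s^2}+\frac{\chi(s)^2}{2\mu}\Bigr)\,ds,
\]
and integration by parts together with a bootstrap on the quadratic source yields $|\chi(r)|\lesssim r^{-2}$; the substitution $\phi=\psi+\mu/r$ then gives \eqref{1.18}. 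The hardest step will be pinning down the lower bound on $a_*$ in the sub-threshold regime $-\mu/r_0<v_+<0$, where the natural supersolution $\sqrt{v_+^2-\mu^2/r^2}$ becomes imaginary and must be replaced by a more delicate comparison argument ruling out $\psi_a$ from first overshooting $v_+$ and then blowing up.
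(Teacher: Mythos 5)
Your shooting/trichotomy framework is a genuinely different organization from the paper's proof, which instead \emph{constructs} $a_*$ as $\lim_{r_1\to\infty}\eta(r_0;r_1)$, where $\eta(\cdot;r_1)$ is the solution anchored on the upper nullcline, $\eta(r_1;r_1)=\sqrt{v_+^2-\mu^2/r_1^2}$, and then uses the family $\eta(\cdot;r_1)$ as explicit upper barriers for $\psi$ (the lower barrier being the linear comparison function $\psi^2_S$). Both routes ultimately rest on the same two facts: the comparison principle, and the observation that every critical point of a solution of \eqref{1.15}$_1$ satisfies $\psi''=-\mu/r^3<0$, hence is a strict local maximum; this is the paper's Lemma 2.1 and it is exactly what you need (but do not state) to justify your claim that every bounded global solution is eventually monotone and converges to $\pm|v_+|$. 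Your abstract argument ($A$ and $B$ open, $A$ downward-closed, $B$ upward-closed, $|C|\le1$) delivers the sharp dichotomy $V_-<a_*$ more transparently than the paper's two-case comparison with $\eta^*(\cdot;a_*)$ and $\eta(\cdot;\tilde r_1)$, at the cost of having to verify openness of $A$ (a trapping-region argument near $v_+$) and still having to produce explicit sub/supersolutions to locate $a_*$ as in \eqref{1.16}.

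There are, however, two concrete gaps. First, your integral representation for $\chi=\psi-v_+$ is wrong as written: differentiating $\chi(r)=e^{|v_+|r/\mu}\int_r^\infty e^{-|v_+|s/\mu}\bigl(\tfrac{\mu}{2s^2}+\tfrac{\chi^2}{2\mu}\bigr)ds$ returns the ODE with all signs reversed, and the correctly signed version $e^{-|v_+|r/\mu}\int_r^\infty e^{|v_+|s/\mu}(\cdots)ds$ diverges because $\chi$ decays only algebraically. The repair is the paper's (2.23)--(2.24): pick $r^*$ with $\tfrac{3v_+}{2}\le\psi\le\tfrac{v_+}{2}$ for $r\ge r^*$ and integrate \emph{forward} from $r^*$ with the integrating factor $\exp\bigl(-\tfrac1{2\mu}\int_{r^*}^r(\psi+v_+)\bigr)$, which decays exponentially and yields $|\chi|\lesssim r^{-2}$. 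Second, you explicitly leave open the lower bound $a_*\ge-\mu/r_0$ when $-\mu/r_0<v_+<0$; this is fillable without any ``delicate'' argument, since $h(r)=-\mu/r$ is a strict supersolution ($h'-\tfrac1{2\mu}(h^2-v_+^2+\mu^2/r^2)=\tfrac{v_+^2}{2\mu}>0$), so $\psi_{-\mu/r_0}\le-\mu/r<0<|v_+|$, is bounded, global, and converges to $v_+$, i.e.\ $-\mu/r_0\in A$ (this is what the paper's barrier $\widetilde\eta$ accomplishes). Finally, your exclusion of $v_+=0$ is incorrect as stated --- by \eqref{1.8} the problem \emph{does} have a solution when $v_+=0$ and $v_-<0$ --- but this does not affect the theorem, which (like the paper's proof) should be read with $v_+<0$ as a standing assumption.
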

\begin{remark} From the proof of Theorem \ref{Thm1.1}, it is easy to see that, generally speaking, the unique solution $\psi(r)$ to the problem \eqref{1.15} is no longer monotonic. In fact, we can deduce from the proof of Theorem \ref{Thm1.1} that if $V_-=\sqrt{v_+^2-\frac{\mu^2}{r^2_0}}$, $\psi(r)$ is strictly monotonic decreasing on $r\geq r_0$, while if $\sqrt{v_+^2-\frac{\mu^2}{r^2_0}}<V_-<a_*$, $\psi(r)$ is firstly strictly monotonic increasing to its maximum, then is strictly monotonic decreasing and tends to $v_+$ as $r\to+\infty$.
\end{remark}

For the time-asymptotically nonlinear stability of the stationary wave $\phi(r)$ constructed in Theorem \ref{Thm1.1}, unlike the one-dimensional case, the main trouble is caused by the fact that such a stationary wave $\phi(r)$ is no longer monotonic and to overcome such a difficulty, as in \cite{Yang-Zhao-Zhao-2019}, we use the anti-derivative method by introducing the new unknown function
\begin{equation}\label{1.20}
  w(t,r)=-\int_r^\infty(v(t,y)-\phi(y))dy,
\end{equation}
then \eqref{1.20}, \eqref{1.15} together with \eqref{1.19} tell us that $w(t,r)$ solves
\begin{eqnarray}\label{1.21}
    w_t+\psi w_r-\mu w_{rr}&=&-\frac12 w_r^2,\quad r>r_0,\ t>0,\nonumber\\
    w_r(t,r_0)&=&\lim\limits_{r\to\infty}w_r(t,r)=\lim\limits_{r\to\infty}w(t,r) =0,\quad t>0,\\
    w(0,r)&=&w_0(r):=-\int_r^\infty\left(v_0(y)-\phi(y)\right)dy,\quad r>r_0.\nonumber
\end{eqnarray}

With the above preparations in hand, we now turn to state our result on the nonlinear stability of the stationary wave $\phi(r)$ constructed in Theorem \ref{Thm1.1}. In fact, motivated by \cite{Yang-Zhao-Zhao-2019}, if we introduce the weight function
$\chi:[r_0,\infty)\rightarrow\mathbb{R}$
\begin{equation}\label{1.22}
  \chi(r)=\exp\left(-\frac1\mu\int_{r_0}^r\psi(s)ds\right)\int_r^\infty
  \left(\frac2{r_0}-\frac1s\right)\exp\left(\frac1\mu\int_{r_0}^s \psi(\tau)d\tau\right)ds
\end{equation}
and by employing the weighted energy method as in \cite{Yang-Zhao-Zhao-2019} with a slight modification, we can get that
\begin{theorem}\label{Thm1.2}
  Suppose that the condition listed in Theorem \ref{Thm1.1} holds and $w_0\in H^2$ satisfying
  \begin{equation}\label{1.23}
    \|w_0\|^2_{L^2([r_0,+\infty))}\left(\|w_{0r}\|^2_{L^2([r_0,+\infty))} +\frac{C_Uv_+^2}{C_L\mu^2}\|w_0\|^2_{L^2([r_0,+\infty))}\right) \leq \frac{\mu^4C_L}{64C_U},
  \end{equation}
  then the initial-boundary value problem \eqref{1.21} admits a unique global solution $w(t,r)$ satisfying
  \begin{eqnarray*}
    w(t,r)&\in& C\left([0,\infty);H^2([r_0,+\infty))\right),\\
    \frac{\partial w(t,r)}{\partial r}&=&v(t,r)-\phi(r)\in  L^2\left([0,\infty); H^2([r_0,+\infty))\right)
  \end{eqnarray*}
  and
  \begin{equation}\label{1.24}
    \lim\limits_{t\to +\infty}\sup_{r\geq r_0}\big(|w(t,r)|+|v(t,r)-\phi(r)|\big)=0,
  \end{equation}
where $C_U:=\|\chi\|_{L^\infty([r_0,+\infty))}, C_L:=\inf\limits_{r\geq r_0}\chi(r)$ and from the estimate \eqref{1.18} and Lemma 2.2 of \cite{Yang-Zhao-Zhao-2019}, we know that $C_U$ and $C_L$ are some positive constants.
\end{theorem}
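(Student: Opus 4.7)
The plan is the standard continuation scheme: combine a local-in-time existence result for the semilinear parabolic problem \eqref{1.21} with a uniform a priori $H^2$ energy estimate, and then extract the uniform decay \eqref{1.24} by a dissipation/interpolation argument. Local existence is routine (contraction mapping for the linearized equation in $C([0,T_*];H^2)$ with $T_*$ small). The substance of the proof is to close the a priori bound under the smallness assumption \eqref{1.23}, and I would do this by a weighted energy method built around the weight $\chi$ of \eqref{1.22}, following the strategy of \cite{Yang-Zhao-Zhao-2019} adapted to the two-dimensional case.

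The pivotal step is the weighted $L^2$ estimate. Multiplying \eqref{1.21}$_1$ by $\chi(r)w(t,r)$, integrating over $[r_0,\infty)$, and integrating the drift and diffusion terms by parts, one finds that the coefficient of $w^2$ in the bulk is $-\tfrac{1}{2}(\chi\psi)' - \tfrac{\mu}{2}\chi''$, while the only surviving boundary contribution at $r_0$ is proportional to $-\bigl[V_-\chi(r_0)+\mu\chi'(r_0)\bigr]w^2(t,r_0)$ (the $ww_r$ boundary term vanishes because $w_r(t,r_0)=0$). The defining identity $\mu\chi' + \psi\chi = -\mu\bigl(\tfrac{2}{r_0}-\tfrac{1}{r}\bigr)$, read off directly from \eqref{1.22}, and its $r$-derivative give
\begin{equation*}
-\tfrac{1}{2}(\chi\psi)'-\tfrac{\mu}{2}\chi'' \;=\; \frac{\mu}{2r^2}, \qquad V_-\chi(r_0)+\mu\chi'(r_0) \;=\; -\frac{\mu}{r_0},
\end{equation*}
so these a priori sign-indefinite expressions become the \emph{positive} lower-order contributions $\tfrac{\mu}{2r^2}w^2$ and $\tfrac{\mu}{2r_0}w^2(t,r_0)$. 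This is the precise mechanism by which $\chi$ converts the sign-indefiniteness caused by the non-monotonicity of $\psi$ (see Remark 1.1) into a coercive estimate. The cubic right-hand side $-\tfrac{1}{2}\int \chi w w_r^2$ is then dominated by $\sup_r|w|$ times the dissipation $\mu\int\chi w_r^2$ and absorbed under smallness.

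The $H^1$ and $H^2$ estimates proceed in the standard way, by multiplying \eqref{1.21}$_1$ and its $r$-derivative by $-w_{rr}$ and $-w_{rrr}$ respectively, integrating by parts using $w_r(t,r_0)=0$ and the decay at infinity. The lower-order terms involving $\psi-v_+$ and $\psi_r$ are controlled by the pointwise decay \eqref{1.18}, together with the basic weighted $L^2$ estimate and smallness; the cubic nonlinearities are handled via the 1D Sobolev embedding $H^1\hookrightarrow L^\infty$ and the size condition \eqref{1.23}. A continuity argument then produces
\begin{equation*}
\sup_{0\le t\le T}\|w(t)\|_{H^2}^2 \;+\; \int_0^T\!\bigl(\|w_r(t)\|_{H^1}^2 + \|w(t)\|_{L^2}^2\bigr)\,dt \;\lesssim\; \|w_0\|_{H^2}^2
\end{equation*}
uniformly in $T\ge 0$, from which global existence follows by extension.

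The uniform decay \eqref{1.24} is then read off in the classical way: the integrability of $\|w_r(t)\|_{H^1}^2$ together with a bound on $\tfrac{d}{dt}\|w_r(t)\|_{H^1}^2$ derived from the equation forces $\|w_r(t)\|_{H^1}\to 0$, hence $\|w_r(t)\|_{L^\infty}\to 0$ by Sobolev embedding, and $\|w(t)\|_{L^\infty}^2\lesssim\|w(t)\|_{L^2}\|w_r(t)\|_{L^2}\to 0$. The main obstacle, as the structure of the argument makes clear, is the non-monotonicity of $\phi$: it invalidates monotone-wave and maximum-principle reasoning and forces the use of a tailor-made weight. The hard part of the proof is thus to verify the two algebraic identities displayed above, to establish positivity of the resulting quadratic form, and to check that every higher-order cross term can be absorbed by the dissipation $\mu\int\chi w_r^2$, $\tfrac{\mu}{2}\int w^2/r^2\,dr$, and $\mu\|w_{rr}\|^2+\mu\|w_{rrr}\|^2$ under \eqref{1.23}.
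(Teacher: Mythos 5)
Your proposal follows essentially the same route as the paper: local existence plus the weighted energy method with the weight $\chi$ of \eqref{1.22}, and your two algebraic identities are exactly the mechanism behind the paper's key identity \eqref{3.5} (bulk coefficient $\tfrac{\mu}{2r^2}$, boundary contribution $\tfrac{\mu}{2r_0}w^2(t,r_0)$), followed by the same higher-order estimates and continuation argument under \eqref{1.23}. The only differences are cosmetic (you test the differentiated equation against $-w_{rr}$, $-w_{rrr}$ rather than $w_r$, $w_{rr}$, which even sidesteps the paper's trace estimate for $w_{rr}(t,r_0)$; and the time-integrated dissipation controls $\|w/r\|^2$ rather than $\|w\|^2$, which does not affect the decay conclusion).
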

\begin{remark} Several remarks are listed below:
\begin{itemize}
\item [(i).] From the assumption \eqref{1.23} we imposed on the initial data $w_0(r)$, one can deduce that $\|w_0\|_{L^2([r_0,+\infty))}$ should be small, while $\|w_{0r}\|_{L^2([r_0,+\infty))}$ can be large. Even so, such a stability result is essentially a stability result with small initial perturbation. It would be interesting to see whether the radially symmetric stationary wave $\psi(r)$ constructed in Theorem \ref{Thm1.1} is nonlinear stable for large initial perturbation or not;
\item [(ii).] Theorem \ref{Thm1.2} shows that the radially symmetric stationary wave $\psi(r)$ constructed in Theorem \ref{Thm1.1} is time-asymptotically nonlinear stable under radially symmetric perturbation. An interesting problem is to see whether it is nonlinear stable or not under general multidimensional perturbation. For some recent progress on this problem for the case when $n\geq 3, v_+=0, v_-<\frac{2\mu(n-2)}{r_0}$, those interested is referred to \cite{Hashimoto-Math Nach-2020}.
\end{itemize}
\end{remark}

For the temporal convergence rates of the unique global solution $v(t,r)$ of the initial-boundary value problem (\ref{1.2}) toward the stationary wave $\phi(r)$, since $\phi(r)$ satisfies \eqref{1.18}, we have by repeating the argument used in \cite{Yang-Zhao-Zhao-2019} that
\begin{theorem}\label{Thm1.3}
  Under the assumptions stated in Theorem \ref{Thm1.2}, we can get that
  \begin{itemize}
  \item For any $\alpha>0,$ if we assume further that $w_0\in L^2_\alpha([r_0,+\infty)),$  then we have
  \begin{equation}\label{1.25}
    \sup_{r\geq r_0}|v(t,r)-\phi(r)|\lesssim \left(\|w_0\|_{H^2([r_0,+\infty))}+\left\|r^{\frac \alpha 2}w_0\right\|_{L^2([r_0,+\infty))}\right)(1+t)^{-\frac \alpha 2},\quad t\geq 0;
  \end{equation}
  \item For any $\beta$ and $\gamma$ satisfying
  \begin{equation}\label{1.26}
      0< \beta\leq \min\left\{\frac2{r_0},\frac{8}{\left(8C_U+1\right)r_0}\right\},\quad 0<\gamma\leq\frac{3\mu\beta}{8r_0C_U},
  \end{equation}
if we assume further that $w_0\in H^{2,\beta}_{\exp}([r_0,+\infty)),$ then there exists a time-independent positive constant $C>0$ such that
   \begin{equation}\label{1.27}
     \sup_{r\geq r_0}|v(t,r)-\phi(r)|\leq C\left\|e^{\beta r/2}w_0\right\|_{H^2([r_0,+\infty))}\exp(-\gamma t)
   \end{equation}
holds for all $t\geq 0$.
\end{itemize}
\end{theorem}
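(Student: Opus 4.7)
The plan is to adapt the time-weighted energy argument of \cite{Yang-Zhao-Zhao-2019}, exploiting the weight $\chi$ introduced in \eqref{1.22} to render the linear operator $\psi\partial_r-\mu\partial_r^2$ coercive in weighted $L^2$, and Theorem~\ref{Thm1.2} to guarantee a global small-perturbation solution whose $H^2$ norm stays bounded.

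For the polynomial rate \eqref{1.25}, I would work with the multiplier $\chi(r)r^\alpha w$ applied to \eqref{1.21}, together with analogous multipliers $\chi r^\alpha w_r$ and $\chi r^\alpha w_{rr}$ applied to the equations satisfied by $w_r$ and $w_{rr}$. After integration by parts, the linear contribution produces a dissipation of the form $\int \chi r^\alpha w_r^2\,dr$ (by the defining property of $\chi$), modulo a remainder that can be absorbed into $\int r^{\alpha-2} w^2\,dr$ using $\chi \sim 1$, while the nonlinear piece $-\tfrac12 w_r^2$ in \eqref{1.21} is controlled through the a priori smallness \eqref{1.23}. Multiplying the resulting identity by $(1+t)^k$ and iterating the bootstrap for $k=0,1,\ldots,\alpha$, one obtains
\[
(1+t)^\alpha\bigl(\|w(t)\|_{H^2}^2+\|r^{\alpha/2}w(t)\|_{L^2}^2\bigr)\leq C\bigl(\|w_0\|_{H^2}^2+\|r^{\alpha/2}w_0\|_{L^2}^2\bigr).
\]
The Sobolev embedding $\|w_r\|_{L^\infty}^2\lesssim \|w_r\|_{L^2}\|w_{rr}\|_{L^2}$, together with the identification $v-\phi=w_r$, then yields \eqref{1.25}.

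For the exponential rate \eqref{1.27}, the polynomial weight $r^\alpha$ is replaced by the exponential weight $e^{\beta r}$. Multiplying \eqref{1.21} and its $r$-derivatives by $\chi e^{\beta r}w$, $\chi e^{\beta r}w_r$, $\chi e^{\beta r}w_{rr}$ and integrating by parts, the decisive term is
\[
-\tfrac12\int_{r_0}^\infty\bigl(\chi e^{\beta r}\psi\bigr)_r w^2\,dr,
\]
which must dominate $2\gamma\int_{r_0}^\infty \chi e^{\beta r}w^2\,dr$. Using \eqref{1.22}, the pointwise bound \eqref{1.18}, and $C_L\leq \chi\leq C_U$, one checks that the constraint $\beta\leq\min\{2/r_0,\,8/((8C_U+1)r_0)\}$ is precisely what renders this coefficient nonnegative with a margin that permits $\gamma$ up to $3\mu\beta/(8r_0C_U)$, thereby accounting for \eqref{1.26}. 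Gronwall's inequality applied to the combined energy then produces exponential decay of $\|e^{\beta r/2}w(t)\|_{H^2}^2$, and Sobolev embedding completes the proof of \eqref{1.27}.

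The main obstacle in both parts is the closure of the weighted estimates in the presence of a \emph{non-monotone} stationary wave $\psi$: without the tailored weight $\chi$, the transport coefficient $\psi$ changes sign in the regime $\sqrt{v_+^2-\mu^2/r_0^2}<V_-<a_\ast$ and the linear operator fails to be coercive. A second delicate point is that each integration by parts generates boundary contributions at $r=r_0$ since $w(t,r_0)$ is not prescribed; the condition $w_r(t,r_0)=0$ in \eqref{1.21}$_2$, together with the explicit values of $\chi(r_0)$ and $\psi(r_0)=V_-$, is what eliminates the most dangerous of these and makes the scheme of \cite{Yang-Zhao-Zhao-2019} go through in the present two-dimensional setting.
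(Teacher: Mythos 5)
Your proposal follows essentially the same route as the paper, which proves Theorem \ref{Thm1.3} simply by invoking the space--time weighted energy argument of Yang--Zhao--Zhao (their Theorems 2.2 and 2.3), i.e.\ the weight $\chi$ of \eqref{1.22} combined with algebraic weights $r^\alpha$ and the $(1+t)^k$ induction for \eqref{1.25}, and with the exponential weight $e^{\beta r}$ for \eqref{1.27}; the only ingredient specific to the present two-dimensional setting is the decay estimate \eqref{1.18}, which you correctly identify as what makes $C_L,C_U$ finite and the scheme close. One minor imprecision: the term $-\tfrac{\alpha}{2}r^{\alpha-1}\chi\psi w^2$ arising from the weight is not absorbed into $\int r^{\alpha-2}w^2\,dr$ (it is of higher order in $r$) but rather has a favorable sign for large $r$ since $\psi\to v_+<0$, and it is precisely this positive contribution that permits trading spatial weight for temporal decay in the induction.
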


Now we outline the main ideas used to prove our main results. For the existence of radially symmetric stationary wave $\psi(r)$ to \eqref{1.15}, as in \cite{Hashimoto-Matsumura-JDE-2019}, one first considers the following Cauchy problem
\begin{eqnarray}\label{1.28}
\frac{d\psi}{dr}&=&\frac1{2\mu}\left(\psi^2-v_+^2-\frac{\mu^2(n-1)(n-3)}{r^2}\right), \quad r>r_0,\nonumber\\
\psi(r_0)&=&V_-:=v_-+\frac{\mu}{r_0}.
\end{eqnarray}
Let $\psi^n(r)$ be the unique local solution of \eqref{1.28} defined on the interval $[r_0,R)$, if one can find suitable lower bound $\widetilde{\psi}_S(r)$ and upper bound $\widetilde{\psi}^S(r)$ for $\psi(r)$ such that
\begin{itemize}
\item both $\widetilde{\psi}_S(r)$ and $\widetilde{\psi}^S(r)$ are well-defined for $r\geq r_0$;
\item $\left|\widetilde{\psi}_S(r)-v_+\right|+ \left|\widetilde{\psi}^S(r)-v_+\right|\lesssim r^{-2}$, $r\geq r_0$,
\end{itemize}
then one can easily deduce by the continuation argument that the Cauchy problem \eqref{1.28} admits a unique solution $\psi^n(r)$ which is defined on $r\geq r_0$ and belongs to $C^\infty([r_0,+\infty))$. Moreover such a $\psi^n(r)$ satisfies $\left|\psi^n(r)-v_+\right|\lesssim r^{-2}$ and thus it is indeed the desired solution of \eqref{1.7}.

The difference between the case $n\geq 4$ and the case $n=2$ lies in the way to construct the desired upper bound $\widetilde{\psi}^S(r)$ for the solution $\psi^n(r)$ of \eqref{1.28}. In fact, for each $n\geq 2$,  $\psi^n_S(r)$ defined by \eqref{1.13} always gives the desired lower bound for the solution $\psi^n(r)$ of \eqref{1.28}, while for $n=2$, $\psi^S(r)$ given by \eqref{1.11} is no longer a upper bound for the solution $\psi^n(r)$ of \eqref{1.28} with $n=2$. To overcome such a difficulty, for each $r_1\geq\max\left\{r_0, \frac\mu{|v_+|}\right\}$, we first consider the following auxiliary Cauchy problem
\begin{eqnarray}\label{1.29}
\frac{d\eta(r;r_1)}{dr}&=&\frac{1}{2\mu}\left(\eta^2(r;r_1)-v_+^2+\frac{\mu^2}{r^2}\right),\nonumber\\
\eta(r;r_1)\big|_{r=r_1}&=&\sqrt{v_+^2-\frac{\mu^2}{r_1^2}}
\end{eqnarray}
and we can show that
\begin{itemize}
\item [(i).] the Cauchy problem \eqref{1.29} admits a unique solution $\eta(r;r_1)$ on the interval $[r_0,+\infty)$;
\item [(ii).] $\eta(r;r_1)$ satisfies
\begin{equation}\label{1.30}
\left|\eta(r;r_1)-v_+\right|\lesssim r^{-2};
\end{equation}
\item [(iii).] $a(r_1)=\eta(r_0;r_1)$ is an increasing function of $r_1$ and is bounded from above by $|v_+|$, thus the limit $a_*=\lim\limits_{r_1\to+\infty}a(r_1)$ exists;
\item [(iv).] Based on the existence of such a constant $a_*$, we can then construct the desired upper bound $\widetilde{\psi}^S(r)$ for the solution $\psi^n(r)$ of \eqref{1.28} with $n=2$ and to show that $a_*$ is indeed the threshold value to guarantee the existence of the stationary wave $\psi(r)$ to \eqref{1.15}.
\end{itemize}
The basis of the above analysis is that
\begin{itemize}
\item suppose that the Cauchy problem \eqref{1.29} possesses a solution $\eta(r;r_1)$ defined on the interval $(R_1, R_2)$ with $r_0\leq R_1\leq r_1<R_2\leq+\infty$, then $\eta(r;r_1)$ is monotonic increasing for $R_1\leq r\leq r_1$ and monotonic decreasing for $r_1\leq r<R_2$.
\end{itemize}

For the time-asymptotically nonlinear stability of the stationary wave constructed in Theorem \ref{Thm1.1}, the main difficulty, as pointed out in \cite{Hashimoto-NonliAnal-2014, Hashimoto-OsakaJMath-2016, Hashimoto-Matsumura-JDE-2019}, is caused by the fact that such a stationary wave is no longer monotonic. Motivated by \cite{Yang-Zhao-Zhao-2019}, our main idea to overcome the above difficulty lies in the following:
\begin{itemize}
\item the first is to use the anti-derivative method by introducing the new
unknown function $w(t,r)$ defined by \eqref{1.20};

\item the second is to use a space weighted energy method to deduce the desired nonlinear stability result. The key point is to introduce the weight function
    $\chi(r)$ given by \eqref{1.22} to overcome the difficulties induced by the non-monotonicity of the stationary wave and the boundary condition.
\end{itemize}
Compared with that of \cite{Yang-Zhao-Zhao-2019}, we use a refined continuation argument so that we can get a nonlinear stability result only under the assumption \eqref{1.23}.

The rest of this paper is organized as follows. In Section 2, we prove Theorem \ref{Thm1.1} and the proof of Theorem \ref{Thm1.2} will be given in Section 3.

\vskip 2mm
\noindent \textbf{Notations:} We denote the usual Lebesgue space of square integrable functions over $[r_0,\infty)$ by $L^2=L^2([r_0,\infty))$ with norm $\|\cdot\|$ and for each non-negative integer $k$, we use $H^k$ to denote the corresponding $k$th-order Sobolev space $H^k([r_0,\infty))$ with norm $\|\cdot\|_{H^k}.$

For $\alpha\in \mathbb{R},$ we denote the algebraic weighted Sobolev space, that is the space of functions $f$ satisfying $ r^{\alpha/2}f\in H^k,$ by $H^{k,\alpha}$ with norm
$$
\|f\|_{k,\alpha}:=\left\| r^{\alpha/2}f\right\|_{H^k}.
$$
For $k=0,$ we denote $\|\cdot\|_{0,\alpha}$ by $|\cdot|_\alpha$ for simplicity. We also denote the exponential weighted Sobolev space, that is, the space of functions $f$ satisfying $e^{\alpha r/2}f\in H^k$ for some $\alpha\in\mathbb{R}$, by $H^{k,\alpha}_{\exp}.$ For $k=0,$ we denote $\|\cdot\|^{0,\alpha}_{\exp}$ by $|\cdot|_{\alpha,\exp}$ for simplicity. For an interval $I\subset \mathbb{R}$ and a Banach space $X,$ $C(I;X)$ denotes the space of continuous $X$-valued functions on $I,$ $C^k(I;X)$ the space of $k$-times continuously differentiable $X$-valued functions.

\section{The proof of Theorem \ref{Thm1.1}}

This section is devoted to proving Theorem \ref{Thm1.1}. To this end, we first consider the following Cauchy problem
\begin{eqnarray}\label{2.1}
\frac{d\psi}{dr}&=&\frac1{2\mu}\left(\psi^2-v_+^2+\frac{\mu^2}{r^2}\right), \quad r>r_0,\nonumber\\
\psi(r_0)&=&V_-:=v_-+\frac{\mu}{r_0}.
\end{eqnarray}

The local existence of smooth solution $\psi(r)$ to the Cauchy problem \eqref{2.1} is well-established and suppose that such a local solution $\psi(r)$ has been extended to the interval $[r_0,R)$ for some $R>r_0$, to show that such a $\psi(r)$ is indeed a solution of \eqref{1.15}, we only need to show that
\begin{itemize}
\item $\psi(r)$ can be extended step by step to the interval $[r_0,+\infty)$;
\item $\lim\limits_{r\to+\infty}\psi(r)=v_+$.
\end{itemize}

For this purpose, by exploiting the standard continuation argument, we only need to deduce certain lower and upper bounds for $\psi(r)$ on the interval $[r_0,R)$. To yield the desired lower bound for $\psi(r)$ is relatively easy. In fact, we can get from the inequality
$$
\frac{1}{2\mu}\left(\psi^2-v_+^2+\frac{\mu^2}{r^2}\right)
\geq \frac{1}{2\mu}\left(2v_+\left(\psi-v_+\right)+\frac{\mu^2}{r^2}\right)
$$
that
$$
\frac{d\psi(r)}{dr}\geq \frac{1}{2\mu}\left(2v_+\left(\psi(r)-v_+\right)+\frac{\mu^2}{r^2}\right),\quad r\in[r_0,R).
$$
From which and \eqref{2.1}$_2$, we can easily deduce that
\begin{eqnarray}\label{2.2}
\psi(r)&\geq& v_++\left(V_--v_+\right)\exp\left(-\frac{|v_+|(r-r_0)}{\mu}\right)
+\frac{\mu}{2}\int_{r_0}^r\frac1{s^2}\exp\left(-\frac{|v_+|(r-s)}{\mu}\right)ds\nonumber\\
&:=&\psi^2_S(r),\quad \forall r\in[r_0,R).
\end{eqnarray}
Moreover, one can find that $\psi^2_S(r)$ given by \eqref{1.13} with $n=2$ is well-defined for $r\geq r_0$ and satisfies
\begin{equation}\label{2.3}
\left|\psi^2_S(r)-v_+\right|\lesssim r^{-2},\quad r\geq r_0.
\end{equation}

Now we turn to find an upper bound for $\psi(r)$ on the interval $[r_0,R)$. Since the last term $-\frac{\mu(n-1)(n-3)}{2r^2}$ in the right hand side of \eqref{1.7}$_1$ has different sign for $n\geq 4$ and for $n=2$, the method used in \cite{Hashimoto-Matsumura-JDE-2019}, which has been proven to be effective for $n\geq 4$, cannot be applied to the two-dimensional case any more. More precisely, even for the case when $v_+<0, V_-<|v_+|$, the function $\psi^S(r)$ defined by \eqref{1.11} is not an upper bound for $\psi(r)$ on the interval $[r_0,R)$. Even so, one can easily deduce that for the case when $v_-<0$, the function $\psi_1^2(r)$ defined by
\begin{equation}\label{2.4}
\psi_1^2(r)=\phi^2_1(r)-\frac\mu r:=\frac{v_-}{r\left[\frac{1}{r_0}-\frac{v_-}{2\mu}\ln\frac{r}{r_0}\right]}-\frac\mu r,
\end{equation}
with $\phi^2_1(r)$ being defined by \eqref{1.8}, indeed gives a upper bound for $\psi(r)$, that is
\begin{equation}\label{2.5}
\psi(r)\leq \psi_1^2(r),\quad r_0\leq r<R.
\end{equation}

Noticing that both $\psi^2_S(r)$ defined by \eqref{1.13} and $\psi_1^2(r)$ given by \eqref{2.4} are defined on $r\geq r_0$ and are uniformly bounded on $[r_0,+\infty)$ which follows from the estimates \eqref{2.3} and \eqref{1.10},one can thus deduce that the above local solution $\psi(r)$ can indeed be extended step by step to the interval $[r_0,+\infty)$. Even so, the problem is that for the case $v_+<0$, since $\lim\limits_{r\to\infty}\psi^2_1(r)=0$, one can not deduce that $\lim\limits_{r\to\infty}\psi(r)=v_+$ even though one can show that such a $\psi(r)$ is an uniformly Lipschitz continuous function on the interval $[r_0,+\infty)$.

To overcome such a difficulty, for each $r_1\geq\max\left\{r_0, \frac\mu{|v_+|}\right\}$ (the existence of such a $r_1$ is guaranteed by the assumption that $v_+<0$), we first consider the following auxiliary Cauchy problem
\begin{eqnarray}\label{2.6}
\frac{d\eta(r;r_1)}{dr}&=&\frac{1}{2\mu}\left(\eta^2(r;r_1)-v_+^2+\frac{\mu^2}{r^2}\right),\nonumber\\
\eta(r;r_1)\big|_{r=r_1}&=&\sqrt{v_+^2-\frac{\mu^2}{r_1^2}}
\end{eqnarray}
and we want to show that the Cauchy problem \eqref{2.4} admits a unique solution $\eta(r;r_1)$ on the interval $[r_0,+\infty)$.

To do so, one can first deduce from the well-known local solvability result for the Cauchy problem of ordinary differential equations again that \eqref{2.6} admits a unique smooth solution $\eta(r;r_1)$ which are defined on the interval $(R_1, R_2)$ with $r_0\leq R_1<r_1<R_2\leq+\infty$ (For the case when $r_1=r_0$, the modification is straightforward, we only need to replace the interval $(R_1,R_2)$ by $[r_0,R_2)$). Our next lemma tells that $r=r_1$ is the global maximum point of $\eta(r;r_1)$ on the interval $(R_1, R_2)$.
\begin{lemma}\label{Lemma2.1} Suppose that $\eta(r;r_1)$ is a smooth solution of the Cauchy problem \eqref{2.6} defined on the interval $(R_1, R_2)$, then $\eta(r;r_1)$ is monotonic increasing for $R_1<r\leq r_1$ and monotonic decreasing for $r_1\leq r<R_2$, thus
\begin{equation}\label{2.7}
\sup\limits_{R_1<r<R_2}\eta(r;r_1)=\eta(r_1;r_1)=\sqrt{v_+^2-\frac{\mu^2}{r_1^2}}.
\end{equation}
\end{lemma}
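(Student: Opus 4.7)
My plan is to exploit the nullcline structure of \eqref{2.6}. I introduce the curve $\gamma(r) := \sqrt{v_+^2 - \mu^2/r^2}$ on $r \geq \mu/|v_+|$, which is smooth and strictly increasing there because $\gamma'(r) = \mu^2/(r^3\gamma(r)) > 0$, and I note that on this range the right-hand side of \eqref{2.6} factors as
$$
\frac{d\eta}{dr} = \frac{1}{2\mu}\bigl(\eta(r) - \gamma(r)\bigr)\bigl(\eta(r) + \gamma(r)\bigr).
$$
Consequently $\eta'(r) < 0$ precisely when $\eta$ lies in the open strip $-\gamma(r) < \eta < \gamma(r)$, and $\eta'(r) > 0$ outside the closed strip. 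The initial condition in \eqref{2.6} places the solution exactly on the upper nullcline at $r = r_1$ with $\eta'(r_1) = 0$, so proving the lemma reduces to showing that the solution stays strictly inside the strip for $r_1 < r < R_2$ and stays strictly above the upper nullcline (or outside its domain) for $R_1 < r < r_1$.

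For the right interval, I first use the linearization $(\eta - \gamma)'(r_1) = \eta'(r_1) - \gamma'(r_1) = -\gamma'(r_1) < 0$ to conclude $\eta(r) < \gamma(r)$ on an interval immediately to the right of $r_1$. I then upgrade this via a first-crossing argument: if there were a smallest $r^\ast \in (r_1, R_2)$ with $\eta(r^\ast) = \gamma(r^\ast)$, the sign of $\eta - \gamma$ on $[r_1, r^\ast]$ would force $(\eta - \gamma)'(r^\ast) \geq 0$, yet at the nullcline $\eta'(r^\ast) = 0$ and $\gamma'(r^\ast) > 0$ yield $(\eta - \gamma)'(r^\ast) = -\gamma'(r^\ast) < 0$, a contradiction. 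A symmetric first-crossing argument applied to $\eta + \gamma$ prevents $\eta$ from ever touching the lower nullcline $-\gamma$. Thus $-\gamma(r) < \eta(r) < \gamma(r)$ strictly on $(r_1, R_2)$, so $\eta'(r) < 0$ there and $\eta$ is strictly decreasing on $[r_1, R_2)$.

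For the left interval $(R_1, r_1]$ I will split at the critical radius $r = \mu/|v_+|$. On $(R_1, \mu/|v_+|)$, when this subinterval is nonempty, the quantity $\eta^2 - v_+^2 + \mu^2/r^2$ is positive for every real value of $\eta$, hence $\eta'(r) > 0$ automatically. On $[\mu/|v_+|, r_1)$ the same linearization $(\eta - \gamma)'(r_1) < 0$ now gives $\eta > \gamma$ just to the left of $r_1$, and an analogous first-crossing argument (run leftward from $r_1$) prevents $\eta$ from meeting $\gamma$ again on this subinterval; hence $\eta(r) > \gamma(r) \geq 0$ there, so $\eta'(r) > 0$. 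Concatenating the two subcases gives strict monotone increase of $\eta$ on $(R_1, r_1]$, and combining with the right-side result yields \eqref{2.7} at once.

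The delicate step is the tangential meeting of $\eta$ and $\gamma$ at $r = r_1$: because $\eta(r_1) = \gamma(r_1)$ and $\eta'(r_1) = 0$ simultaneously, the barrier argument is driven entirely by the first-order comparison $\eta'(r) - \gamma'(r)$ at prospective crossing points, so what does the work is the strict positivity $\gamma'(r_1) > 0$. This is precisely what the standing hypothesis $r_1 \geq \max\{r_0, \mu/|v_+|\}$ guarantees, once one checks that the degenerate case $r_1 = \mu/|v_+|$ (where $\gamma'(r_1)$ blows up) only strengthens the transversality and therefore causes no harm.
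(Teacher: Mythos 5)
Your proof is correct, and it reaches the conclusion by a genuinely different mechanism than the paper's. The paper argues entirely through the second derivative: differentiating \eqref{2.6} gives $\eta''=\frac{1}{\mu}\bigl(\eta\,\eta'-\frac{\mu^2}{r^3}\bigr)$, so at \emph{any} point where $\eta'=0$ one has $\eta''=-\mu/r^3<0$; every critical point is thus a strict local maximum, and a sup/continuation argument (written out in the paper only for the decreasing half $[r_1,R_2)$, with the increasing half left to the reader) shows the sign of $\eta'$ can never flip back. Your nullcline--barrier argument exploits the same structural fact — at a crossing of the nullcline $\eta'=0$ while the nullcline itself is strictly increasing — but packages it as a first-crossing comparison of $\eta$ against $\pm\gamma$. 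The trade-offs: the paper's computation is valid for all $r>0$ without reference to where $\gamma$ is real, so it needs neither your case split of the left interval at $r=\mu/|v_+|$ nor the separate discussion of the degenerate tangency $r_1=\mu/|v_+|$ where $\gamma'$ blows up (which you correctly flag and dispose of); your version, in exchange, yields a sharper global picture — the trajectory enters and remains in the strip $-\gamma<\eta<\gamma$ for $r>r_1$ — which delivers the strict bound $\eta(r;r_1)<|v_+|$ of \eqref{2.12} essentially for free, and you treat both monotonicity halves explicitly. Both proofs are complete.
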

\begin{proof} We only prove that $\eta(r;r_1)$ is monotonic decreasing for $r_1\leq r<R_2$. For this purpose, we first get from \eqref{2.6} that
\begin{eqnarray}\label{2.8}
\left.\frac{d\eta(r;r_1)}{dr}\right|_{r=r_1}&=&\frac{1}{2\mu}\left(\eta^2(r_1;r_1)-v_+^2 +\frac{\mu^2}{r^2_1}\right)=0,\nonumber\\
\left.\frac{d^2\eta(r;r_1)}{dr^2}\right|_{r=r_1}&=&\left.\frac{1}{\mu}\left(\eta(r;r_1) \frac{d\eta(r;r_1)}{dr}-\frac{\mu^2}{r^3}\right)\right|_{r=r_1}\\
&=&-\frac{\mu}{r^3_1}<0.\nonumber
\end{eqnarray}

\eqref{2.8} tells us that $\frac{d\eta(r;r_1)}{dr}<0$ holds for all $r_1<r\leq r_1+\varepsilon$. Here $\varepsilon>0$ is a suitably chosen sufficiently small positive constant. Now if we set
\begin{equation}\label{2.9}
R^*:=\sup\left\{r\in (r_1, R_2): \frac{d\eta(s;r_1)}{dr}\leq 0,\quad \forall s\in(r_1,r)\right\},
\end{equation}
then we can get that $R^*\in [r_1+\varepsilon, R_2)$.

We now show that $R^*=R_2$. Otherwise, if $R^*<R_2$, we can deduce that
\begin{equation}\label{2.10}
\left.\frac{d\eta(r;r_1)}{dr}\right|_{r=R^*}=0.
\end{equation}

\eqref{2.10} together with \eqref{2.6} imply
\begin{eqnarray}\label{2.11}
\left.\frac{d^2\eta(r;r_1)}{dr^2}\right|_{r=R^*}&=&\left.\frac{1}{\mu}\left(\eta(r;r_1) \frac{d\eta(r;r_1)}{dr}-\frac{\mu^2}{r^3}\right)\right|_{r=R^*}\\
&=&-\frac{\mu}{\left(R^*\right)^3}<0,\nonumber
\end{eqnarray}
from which one can further deduce that there exists a sufficiently small positive constant $r^*>0$ such that $\frac{d\eta(r;r_1)}{dr}<0$ holds for all $r\in(R^*, R^*+r^*)$, but this fact contradicts the definition of $R^*$ and consequently $R^*=R_2$. This completes the proof of Lemma \ref{Lemma2.1}.
\end{proof}

From Lemma \ref{Lemma2.1}, one can deduce the following estimate on the upper bound of the solution $\eta(r;r_1)$ of the Cauchy problem \eqref{2.6}
\begin{equation}\label{2.12}
\eta(r;r_1)\leq \eta(r_1;r_1)=\sqrt{v_+^2-\frac{\mu^2}{r_1^2}}<|v_+|,\quad R_1<r<R_2.
\end{equation}

To get an estimate on the lower bound of $\eta(r;r_1)$, we need the following comparison principle for the Cauchy problem \eqref{2.6}, which will be frequently used in this section.
\begin{lemma}[Comparison Principle]
  Let
$$y_1<y_2=y_3, \quad \delta_{ij}=\left\{
  \begin{array}{rl}
     1,\ &i=j,\\
     0,\ &i\neq j,
  \end{array}\right.
$$
and $r_0\leq r_1\leq r_2.$ Suppose that $\eta_i(r;r_1)\in C^1([r_0,r_2])$ is the solution of
\begin{eqnarray}\label{2.13}
\frac{d\eta_{i}(r;r_1)}{dr}&=&\frac1{2\mu}\left(\eta_i(r;r_1)^2-\left(1-\delta_{i3}\right)v_+^2 +\frac{\mu^2}{r^2}\right),\quad r_0\leq r\leq r_2,\nonumber\\
\eta_i(r_1;r_1)&=&y_i
\end{eqnarray}
for $i=1,2,3.$ Then we have
\begin{itemize}
\item [(i).] $\eta_1(r;r_1)\leq\eta_2(r;r_1)$ holds for $r\in [r_0,r_2]$;
\item [(ii).] $\eta_2(r;r_1)\geq\eta_3(r;r_1)$ holds for $r\in [r_0,r_1],$ while $\eta_2(r;r_1)\leq\eta_3(r;r_1)$ holds for $r\in [r_1,r_2]$.
\end{itemize}
\end{lemma}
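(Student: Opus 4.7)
The plan is to reduce both parts of the comparison to linear first-order ODEs satisfied by the differences of solutions, and then read off the signs from explicit integrating-factor representations. Since the hypothesis already furnishes $\eta_i\in C^1([r_0,r_2])$ for $i=1,2,3$, no continuation or existence step is required; the argument is purely algebraic.

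For part (i), I would set $w(r):=\eta_2(r;r_1)-\eta_1(r;r_1)$. Because $\eta_1$ and $\eta_2$ solve the same ODE (the case $1-\delta_{i3}=1$), subtracting the equations and factoring $\eta_2^2-\eta_1^2=(\eta_2+\eta_1)(\eta_2-\eta_1)$ yields the linear homogeneous equation
$$
w'(r)=\frac{\eta_1(r;r_1)+\eta_2(r;r_1)}{2\mu}\,w(r),\qquad w(r_1)=y_2-y_1>0.
$$
Integrating gives
$$
w(r)=(y_2-y_1)\exp\!\left(\int_{r_1}^{r}\frac{\eta_1(s;r_1)+\eta_2(s;r_1)}{2\mu}\,ds\right),
$$
which is strictly positive on all of $[r_0,r_2]$, so $\eta_1\le \eta_2$ (in fact strictly).

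For part (ii), I would set $z(r):=\eta_3(r;r_1)-\eta_2(r;r_1)$. The equations for $\eta_2$ and $\eta_3$ differ only by the term $-v_+^2$ in $\eta_2$'s right-hand side, so subtracting and again factoring the quadratic difference gives the inhomogeneous linear equation
$$
z'(r)=\frac{\eta_2(r;r_1)+\eta_3(r;r_1)}{2\mu}\,z(r)+\frac{v_+^2}{2\mu},\qquad z(r_1)=y_3-y_2=0.
$$
Variation of parameters yields
$$
z(r)=\frac{v_+^2}{2\mu}\int_{r_1}^{r}\exp\!\left(\int_{s}^{r}\frac{\eta_2(\tau;r_1)+\eta_3(\tau;r_1)}{2\mu}\,d\tau\right)ds.
$$
Since $v_+<0$ and the exponential factor is positive, the sign of $z(r)$ equals the sign of $r-r_1$: one has $z(r)\le 0$ for $r\in[r_0,r_1]$ and $z(r)\ge 0$ for $r\in[r_1,r_2]$, which is exactly the asserted pair of inequalities $\eta_2\ge \eta_3$ on $[r_0,r_1]$ and $\eta_2\le \eta_3$ on $[r_1,r_2]$.

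There is essentially no obstacle to this proof; the only mild care needed is in bookkeeping the orientation of the integral in part (ii), since reversing the endpoints when $r<r_1$ flips the sign of $z(r)$ — and it is precisely this orientation reversal that produces the sign change at $r=r_1$. The structural mechanism driving the lemma is that the strictly positive source $v_+^2/(2\mu)$ pushes $\eta_3$ above $\eta_2$ as $r$ increases past $r_1$ and, symmetrically, below $\eta_2$ as $r$ decreases past $r_1$.
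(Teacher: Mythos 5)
Your proof is correct; the paper itself omits the argument with the remark that it is standard, and your integrating-factor computation for the differences $\eta_2-\eta_1$ and $\eta_3-\eta_2$ is exactly the standard argument being alluded to, with the sign change in (ii) correctly traced to the orientation of $\int_{r_1}^{r}$. One minor point: you invoke $v_+<0$, but all that is needed is $v_+^2\ge 0$, which holds regardless of the sign of $v_+$ (and if $v_+=0$ the two solutions coincide, consistent with the non-strict inequalities in the statement).
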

Since the proof of Lemma 2.2 is standard, we omit the details for brevity.

Now we turn to deduce the desired lower bound estimate for $\eta(r;r_1)$. To this end, we define
\begin{equation}\label{2.14}
\overline{\eta}(r;r_1)=v_+ +\left(\eta(r_1;r_1)-v_+\right)\exp\left(-\frac{|v_+|(r-r_1)}{\mu}\right) +\frac{\mu}{2}\int_{r_1}^r\frac1{s^2}\exp\left(-\frac{|v_+|(r-s)
     }{\mu}\right)ds,\quad r\geq r_1
\end{equation}
and
\begin{equation}\label{2.15}
\widetilde{\eta}(r;r_1)=\frac 1r \left(\frac{\left(r_1\eta(r_1;r_1)+\mu\right)} {\frac1{2\mu}\left(r_1\eta(r_1;r_1)+\mu\right)\ln\frac{r_1}{r}+1}-\mu\right),\quad r_0\leq r\leq r_1,
\end{equation}
which solve the following Cauchy problems
\begin{eqnarray}\label{2.16}
\frac{d\overline{\eta}(r;r_1)}{dr}&=&\frac{v_+}{\mu}\left(\overline{\eta}(r;r_1)-v_+\right) +\frac{\mu}{2r^2},\quad r\geq r_1,\nonumber\\
\overline{\eta}(r_1;r_1)&=&\eta(r_1;r_1)=\sqrt{v_+^2-\frac{\mu^2}{r_1^2}}
\end{eqnarray}
and
\begin{eqnarray}\label{2.17}
\frac{d\widetilde{\eta}(r;r_1)}{dr}&=&\frac{\widetilde{\eta}(r;r_1)^2}{2\mu} +\frac{\mu}{2r^2}, \quad r_0\leq r\leq r_1,\nonumber\\
\widetilde{\eta}(r_1;r_1)&=&\eta(r_1;r_1)=\sqrt{v_+^2-\frac{\mu^2}{r_1^2}},
\end{eqnarray}
respectively.

Since $\eta(r_1;r_1)=\sqrt{v_+^2-\frac{\mu^2}{r_1^2}}>0$, one can deduce that  $\widetilde{\eta}(r;r_1)$ is well-defined for $r_0\leq r\leq r_1$, while $\overline{\eta}(r;r_1)$ is well-defined for $r\geq r_1$. Moreover, one can further deduce that
\begin{eqnarray}\label{2.18}
\left|\overline{\eta}(r;r_1)-v_+\right|&\lesssim& r^{-2},\quad r\geq r_1,\nonumber\\
\overline{\eta}(r;r_1)&\leq& \sqrt{v_+^2-\frac{\mu^2}{r_1^2}}<|v_+|,\quad r\geq r_1,\\
\widetilde{\eta}(r;r_1)&\geq & \widetilde{\eta}(r_0;r_1)\geq -\frac\mu{r_0},\quad r_0\leq r\leq r_1\nonumber
\end{eqnarray}
and by employing Lemma 2.2, the solution $\eta(r;r_1)$ of the Cauchy problem \eqref{2.6} satisfies the following lower bound estimate
\begin{equation}\label{2.19}
\eta(r;r_1)\geq \left\{
\begin{array}{rl}
\widetilde{\eta}(r;r_1),\quad & R_1<r\leq r_1,\\[1mm]
\overline{\eta}(r;r_1),\quad & r_1\leq r<R_2.
\end{array}
\right.
\end{equation}

Combining the estimates \eqref{2.12}, \eqref{2.18} and \eqref{2.19}, one can deduce that the solution $\eta(r,;r_1)$ of the Cauchy problem \eqref{2.6} constructed above can be indeed extended to the interval $[r_0,+\infty)$. Moreover we can further obtain from Lemma \ref{Lemma2.1} that $\eta(r;r_1)$ is uniformly bounded in $[r_0,+\infty)$ and is monotonic decreasing for $r\geq r_1$ and consequently there exists a constant $\ell\in\mathbb{R}$ such that
\begin{equation}\label{2.20}
\lim\limits_{r\to+\infty}\eta(r;r_1)=\ell,\quad \lim\limits_{r\to+\infty}\frac{d\eta(r;r_1)}{dr}=0.
\end{equation}

Having obtained \eqref{2.20}, one can get by taking the limit as $r\to+\infty$ in \eqref{2.6}$_1$ that
\begin{equation}\label{2.21}
\ell^2=|v_+|^2.
\end{equation}
From which and the estimate \eqref{2.12}, we can finally deduce that $\ell=v_+$. Thus we have proved that the solution $\eta(r;r_1)$ constructed above satisfies $\lim\limits_{r\to+\infty}\eta(r;r_1)=v_+$.

Now we prove that $\eta(r;r_1)$ satisfies
\begin{equation}\label{2.22}
\left|\eta(r;r_1)-v_+\right|\lesssim r^{-2}
\end{equation}
for $r\geq r_0$.

To prove \eqref{2.22}, we first get from the fact $\lim\limits_{r\to+\infty}\eta(r;r_1)=v_+$ that there exists a sufficiently large $r^*>r_0$ such that
\begin{equation}\label{2.23}
\frac{3v_+}{2}\leq \eta(r;r_1)\leq \frac{v_+}{2}
\end{equation}
holds for $r\geq r^*$.

On the other hand, noticing that \eqref{2.6}$_1$ can be rewritten as
\begin{equation*}
\frac{d(\eta(r;r_1)-v_+)}{dr}=\frac{\eta(r;r_1)+v_+}{2\mu}(\eta(r;r_1)-v_+)+\frac{\mu}{2r^2},\quad r\geq r_0,
\end{equation*}
we can get that
\begin{eqnarray}\label{2.24}
\eta(r;r_1)-v_+&=&\left(\eta(r^*;r_1)-v_+\right) \exp\left(\frac{1}{2\mu}\int^r_{r^*}\left(\eta(s;r_1)+v_+\right)ds\right)\\
&&+\frac{\mu}{2}\int^r_{r^*}\frac{1}{s^2}\exp\left(\frac{1}{2\mu}\int^r_{s} \left(\eta(z;r_1)+v_+\right)dz\right)ds,\quad r\geq r^*.\nonumber
\end{eqnarray}

Having obtained \eqref{2.23} and \eqref{2.24}, the estimate \eqref{2.22} follows immediately from the assumption that $v_+<0$. Thus we have proved that
\begin{lemma}\label{Lemma2.3} Let $v_+<0$, then for each $r_1\geq\max\left\{r_0, \frac\mu{|v_+|}\right\}$, the Cauchy problem \eqref{2.6} admits a unique smooth solution $\eta(r;r_1)$ which is defined on the interval $[r_0,+\infty)$ and satisfies \eqref{2.22}.
\end{lemma}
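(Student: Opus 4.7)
The plan is to assemble the material developed in this section into four pieces: local existence on a maximal interval, extension to $[r_0,+\infty)$ via explicit two-sided a priori bounds, identification of the limit at $+\infty$ from monotonicity and the ODE, and finally the $r^{-2}$ convergence rate via an integrating-factor representation.

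First I invoke Picard--Lindel\"of applied to the smooth right-hand side of \eqref{2.6}$_1$ (nonsingular for $r\ge r_0>0$) to get a unique $C^\infty$ local solution $\eta(r;r_1)$ on a maximal open interval $(R_1,R_2)$ (respectively $[r_0,R_2)$ when $r_1=r_0$) with $r_0\le R_1<r_1<R_2\le+\infty$. Lemma \ref{Lemma2.1} immediately supplies the uniform upper bound \eqref{2.12}, and in particular $\eta(r;r_1)<|v_+|$ throughout the maximal interval. For the lower bound I split at $r=r_1$: on $[r_1,R_2)$ a direct computation shows that the linearization $\overline{\eta}(r;r_1)$ of \eqref{2.14} satisfies the identity $(\eta-\overline{\eta})'=\tfrac{v_+}{\mu}(\eta-\overline{\eta})+\tfrac{(\eta-v_+)^2}{2\mu}$, so Duhamel with vanishing initial datum at $r_1$ forces $\eta\ge\overline{\eta}$; on $[r_0,r_1]$ I compare with $\widetilde{\eta}(r;r_1)$ of \eqref{2.15}, whose equation \eqref{2.17} has strictly larger right-hand side (the omission of $-v_+^2$), so that Lemma 2.2(ii) yields $\eta\ge\widetilde{\eta}$. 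Since $\overline{\eta}$ and $\widetilde{\eta}$ are explicit and uniformly bounded on their respective half-intervals by \eqref{2.18}, the trapping is uniform and the standard continuation principle forces $R_1=r_0$ and $R_2=+\infty$.

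Next I analyze the behavior as $r\to+\infty$. Monotonicity from Lemma \ref{Lemma2.1} together with the uniform lower bound from $\overline{\eta}$ force $\eta(r;r_1)\to\ell$ for some $\ell\in\mathbb{R}$, and then $\eta'\to 0$, so passing to the limit in \eqref{2.6}$_1$ gives $\ell^2=v_+^2$; the strict upper bound $\eta<|v_+|$ in \eqref{2.12} excludes $\ell=|v_+|$, hence $\ell=v_+$. For the convergence rate, I rewrite \eqref{2.6}$_1$ as the linear first-order equation $(\eta-v_+)'=\tfrac{\eta+v_+}{2\mu}(\eta-v_+)+\tfrac{\mu}{2r^2}$. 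Choosing $r^*$ large enough that $\eta(r;r_1)\le v_+/2$ on $[r^*,+\infty)$ (which is possible since $\ell=v_+<0$) makes the coefficient bounded above by $3v_+/(4\mu)<0$, so the integrating-factor representation \eqref{2.24} together with a one-line estimate $\int_{r^*}^{r}s^{-2}\exp\bigl(\tfrac{1}{2\mu}\int_s^r(\eta+v_+)d\tau\bigr)ds\lesssim r^{-2}$ yields $|\eta-v_+|\lesssim r^{-2}$ on $[r^*,+\infty)$; the bound on the compact piece $[r_0,r^*]$ is then automatic by continuity.

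The main obstacle I expect is the verification that $\overline{\eta}$ and $\widetilde{\eta}$ really bracket $\eta$ from below in the correct direction on their respective intervals. For $\overline{\eta}$ the key is the algebraic identity $\eta^2-v_+^2-2v_+(\eta-v_+)=(\eta-v_+)^2\ge 0$, which converts the difference $(\eta-\overline{\eta})'$ into a linear ODE with nonnegative forcing; for $\widetilde{\eta}$ the subtle point is the global solvability of \eqref{2.17} on the entire $[r_0,r_1]$, which is secured by the positivity of the initial value $\sqrt{v_+^2-\mu^2/r_1^2}$ (guaranteed by the hypothesis $r_1\ge\mu/|v_+|$) together with the explicit Bernoulli formula \eqref{2.15}. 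Once these sign checks are in place, the rest of the argument is a routine combination of comparison, continuation and integrating-factor estimates.
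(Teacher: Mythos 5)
Your proposal is correct and follows essentially the same route as the paper: the upper bound from Lemma \ref{Lemma2.1}, the lower bounds via $\overline{\eta}$ and $\widetilde{\eta}$ from \eqref{2.14}--\eqref{2.15} combined with the comparison principle, continuation to $[r_0,+\infty)$, identification of the limit $\ell=v_+$ from $\ell^2=v_+^2$ and the strict bound \eqref{2.12}, and the integrating-factor representation \eqref{2.24} for the $r^{-2}$ rate. Your explicit Duhamel identity $(\eta-\overline{\eta})'=\tfrac{v_+}{\mu}(\eta-\overline{\eta})+\tfrac{(\eta-v_+)^2}{2\mu}$ is a slightly more self-contained justification of $\eta\geq\overline{\eta}$ than the paper's appeal to Lemma 2.2, but this is a presentational difference, not a different argument.
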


For each $r_1\geq\max\left\{r_0, \frac\mu{|v_+|}\right\}$, if we set
\begin{equation}\label{2.25}
a(r_1):=\eta(r_0;r_1),
\end{equation}
it is easy to prove that $a(r_1)$ is a smooth, increasing function of $r_1$. Moreover, the estimate \eqref{2.7} obtained in Lemma \ref{Lemma2.1} implies that
\begin{equation}\label{2.26}
a(r_1)\leq \eta(r_1;r_1)=\sqrt{v_+^2-\frac{\mu^2}{r_1^2}}.
\end{equation}
Thus there exists a constant $a_*\in\mathbb{R}$ such that $a_*=\lim\limits_{r_1\to+\infty}a(r_1)$. From the estimate \eqref{2.19}, one can easily deduce that $a_*$ satisfies \eqref{1.16}.

For the convergence of $\eta(r;r_1)$ as $r_1\to+\infty$, we first get from the estimates \eqref{2.18} and \eqref{2.19} that $\eta(r;r_1)$ satisfies the following estimate
\begin{equation}\label{2.27}
-\frac{\mu}{r_0}\leq \eta(r;r_1)\leq |v_+|,\quad r\geq r_0
\end{equation}
with its lower and upper bounds independent of the parameter $r_1$。

The estimate \eqref{2.27} together with the fact that $\eta(r;r_1)$ solves the following Cauchy problem
\begin{eqnarray}\label{2.28}
\frac{d\eta(r;r_1)}{dr}&=&\frac{1}{2\mu}\left(\eta^2(r;r_1)-v_+^2 +\frac{\mu^2}{r^2}\right),\quad r\geq r_0,\nonumber\\
\eta(r;r_1)\big|_{r=r_0}&=&a(r_1)
\end{eqnarray}
tell us that, for each $k\in\mathbb{N}$, there exists a positive constant $C_k$ independent of the parameter $r_1$ such that
\begin{equation}\label{2.29}
\sup\limits_{r\geq r_0}\left\{\left|\frac{d^k\eta(r;r_1)}{dr^k}\right|\right\}\leq C_k.
\end{equation}
Thus there exists a function $\eta^*(r;a_*)\in C^\infty ([r_0,+\infty))$ such that, for each $k\in\mathbb{N}$, $\frac{d^k\eta(r;r_1)}{dr^k}$ converges uniformly locally in $[r_0,+\infty)$ to $\frac{d^k\eta^*(r;a_*)}{dr^k}$ as $r_1\to+\infty$.

Moreover, since
$$
\lim\limits_{r_1\to+\infty}\eta(r_1;r_1) =\lim\limits_{r_1\to+\infty}\sqrt{v_+^2-\frac{\mu^2}{r_1^2}}
=|v_+|,
$$
one can show that $\eta^*(r;a_*)$ satisfies
\begin{eqnarray}\label{2.30}
\frac{d\eta^*(r;a_*)}{dr}&=&\frac{1}{2\mu}\left(\left(\eta^*(r;a_*)\right)^2-v_+^2 +\frac{\mu^2}{r^2}\right),\quad r\geq r_0,\nonumber\\
\eta^*(r;a_*)\big|_{r=r_0}&=&a_*,\quad \lim\limits_{r\to+\infty}\eta^*(r;a_*)=|v_+|.
\end{eqnarray}

With the above preparations in hand, we now turn to prove our main result Theorem \ref{Thm1.1}. Our main idea is to deduce a suitable upper bound on the solution $\psi(r)$ of the Cauchy problem \eqref{2.1} on the interval $[r_0,R)$ in which $\psi(r)$ is defined.

For the case when $V_-\geq a_*$, we first consider the Cauchy problem \eqref{2.1} and suppose that its unique solution $\psi(r)$ is defined on the interval $[r_0,R)$ for some $R>r_0$, since $V_-\geq a_*$, we can get from Lemma 2.2 that the following estimate
\begin{equation}\label{2.31}
\psi(r)\geq \eta^*(r;a_*),\quad r_0\leq r<R
\end{equation}
holds. In such a case, even if such a $\psi(r)$ can be extended to the interval $[r_0,+\infty)$ (consequently the estimate \eqref{2.13} also holds for $r\geq r_0$), we can only deduce that $\lim\limits_{r\to+\infty}\psi(r)\geq |v_+|$ since $\lim\limits_{r\to+\infty}\eta^*(r;a_*)=|v_+|$. Thus such a $\psi(r)$ can not satisfy \eqref{1.15}.

On the other hand, for the case when $V_-<a_*$, we can take $\tilde{r}_1\geq\max\left\{r_0, \frac\mu{|v_+|}\right\}$ sufficiently large such that
\begin{equation}\label{2.32}
V_-<a\left(\tilde{r}_1\right):=\eta\left(r_0;\tilde{r}_1\right)<a_*.
\end{equation}
For the above chosen $a\left(\tilde{r}_1\right)$, we have from Lemma \ref{Lemma2.3} that the following Cauchy problem
\begin{eqnarray}\label{2.33}
\frac{d\eta\left(r;\tilde{r}_1\right)}{dr} &=&\frac{1}{2\mu}\left(\left(\eta\left(r;\tilde{r}_1\right)\right)^2-v_+^2 +\frac{\mu^2}{r^2}\right),\quad r\geq r_0,\nonumber\\
\eta\left(r;\tilde{r}_1\right)\big|_{r=r_0}&=&a\left(\tilde{r}_1\right)
\end{eqnarray}
admits a unique solution $\eta\left(r;\tilde{r}_1\right)$ for $r\geq r_0$ and satisfies
\begin{equation}\label{2.34}
\lim\limits_{r\to+\infty}\eta\left(r;\tilde{r}_1\right)=v_+
\end{equation}
and
\begin{equation}\label{2.35}
\left|\eta\left(r;\tilde{r}_1\right)-v_+\right|\lesssim r^{-2},\quad r\geq r_0.
\end{equation}
Furthermore, Lemma 2.2 together with the assumption $V_-<a_*$ imply
\begin{equation}\label{2.36}
\psi(r)\leq \eta\left(r;\tilde{r}_1\right),\quad r\in[r_0,R).
\end{equation}

Now the estimates \eqref{2.2} and \eqref{2.36} provide the desired lower and upper bounds for the solution $\psi(r)$ of the Cauchy problem \eqref{2.1}, since both $\psi_S^2(r)$ given by \eqref{2.2} and $\eta\left(r;\tilde{r}_1\right)$ defined by the Cauchy problem \eqref{2.33} satisfy the estimates \eqref{2.3} and \eqref{2.35}, one can deduce immediately that such a $\psi(r)$ can be extended to the interval $[r_0,+\infty)$ and satisfies the estimate \eqref{1.14}. This completes the proof of Theorem \ref{Thm1.1}.

\section{The proof of Theorem \ref{Thm1.2}}

To prove Theorem \ref{Thm1.2}, for some positive constants $T>0$ and $M>0$, we first define the set of functions for which we seek the solution of the initial-boundary value problem (\ref{1.21}) by
\begin{equation*}
  X_M(0,T)=\left\{w(t,r)\in C\left([0,T];H^2([r_0,+\infty))\right), w_r(t,r)\in L^2\left([0,T];H^2([r_0,+\infty))\right),\ \sup\limits_{t\in [0,T]}\|w(t)\|_{L^\infty}\leq  M\right\}.
\end{equation*}
For the the local solvability of the initial-boundary value problem \eqref{1.21} in $X_M(0,T)$, we have

\begin{proposition}[Local solvability result]\label{Prop3.1}
  Assume that $w_0\in H^2$ with $\|w_0\|_{L^\infty}\leq M$ , then there exists a sufficiently small positive constant $t_1=t_1(M)$, which depends only on $M$, such that the initial-boundary value problem (\ref{1.21}) has a unique solution $w(t,r)\in X_{2M}(0,t_1).$
\end{proposition}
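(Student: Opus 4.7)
The plan is to construct $w$ by a Picard iteration. Set $w^{(0)}(t,r)\equiv w_0(r)$ and, for $k\ge 0$, define $w^{(k+1)}$ as the unique solution of the linear parabolic problem
\begin{equation*}
w^{(k+1)}_t+\psi(r)\,w^{(k+1)}_r-\mu\,w^{(k+1)}_{rr}=-\tfrac{1}{2}\bigl(w^{(k)}_r\bigr)^{2},\quad r>r_0,\ 0<t<T,
\end{equation*}
with boundary conditions $w^{(k+1)}_r(t,r_0)=0$, $\lim_{r\to\infty}w^{(k+1)}(t,r)=\lim_{r\to\infty}w^{(k+1)}_r(t,r)=0$, and initial data $w^{(k+1)}(0,r)=w_0(r)$. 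Since $\psi\in C^\infty([r_0,\infty))$ is bounded with bounded derivatives by Theorem~\ref{Thm1.1} and the decay estimate \eqref{1.18}, and since $\bigl(w^{(k)}_r\bigr)^{2}\in L^\infty(0,T;H^1)$ whenever $w^{(k)}\in X_{2M}(0,T)$ (by the one-dimensional embedding $H^1\hookrightarrow L^\infty$), standard theory for scalar linear parabolic equations (via Galerkin or semigroup methods) yields a unique $w^{(k+1)}\in C([0,T];H^2)$ with $w^{(k+1)}_r\in L^2(0,T;H^2)$.

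Next I would close the iteration by deriving uniform-in-$k$ a priori bounds on a short interval $[0,t_1]$. Three successive energy estimates are required: testing the linear equation by $w^{(k+1)}$ yields $L^2$ control, testing by $-w^{(k+1)}_{rr}$ yields control of $\|w^{(k+1)}_r\|^2$, and, after differentiating the equation once in $r$, testing by $-w^{(k+1)}_{rrr}$ yields control of $\|w^{(k+1)}_{rr}\|^2$. Integration by parts over $[r_0,\infty)$ kills most boundary terms because $w^{(k+1)}_r(t,r_0)=0$ and, by $t$-differentiating the boundary condition, $w^{(k+1)}_{tr}(t,r_0)=0$; the only surviving trace, $w^{(k+1)}(t,r_0)^2$, arises from the transport term in the lowest-order estimate and is absorbed into the parabolic dissipation via the one-dimensional interpolation $|f(r_0)|^2\le 2\|f\|\,\|f_r\|$. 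Bounding the source by $\|(w^{(k)}_r)^2\|_{H^1}\lesssim\|w^{(k)}\|_{H^2}^{2}$, the three estimates combine into a differential inequality of the form
\begin{equation*}
\frac{d}{dt}\|w^{(k+1)}\|_{H^2}^{2}+\mu\,\|w^{(k+1)}_r\|_{H^2}^{2}\le C\|w^{(k+1)}\|_{H^2}^{2}+C\|w^{(k)}\|_{H^2}^{4},
\end{equation*}
and Gronwall, together with the inductive hypothesis $\|w^{(k)}\|_{C([0,t_1];H^2)}\le K$ (with $K$ chosen slightly larger than $\|w_0\|_{H^2}$), forces $\|w^{(k+1)}\|_{C([0,t_1];H^2)}\le K$ for $t_1=t_1(M)$ small. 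The one-dimensional Sobolev embedding $H^1\hookrightarrow L^\infty$ then gives $\|w^{(k+1)}\|_{L^\infty}\le 2M$, so $w^{(k+1)}\in X_{2M}(0,t_1)$ uniformly in $k$.

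Convergence is obtained by contraction. Writing $W^{(k)}:=w^{(k+1)}-w^{(k)}$, the difference solves the linear problem with source $-\tfrac12(w^{(k)}_r+w^{(k-1)}_r)W^{(k-1)}_r$, zero initial data, and the same homogeneous boundary conditions. A first-order energy estimate, using the uniform $L^\infty$ bound on $w^{(k)}_r+w^{(k-1)}_r$ from the previous step, yields
\begin{equation*}
\sup_{t\in[0,t_1]}\|W^{(k)}(t)\|_{H^1}^{2}\le C(M)\,t_1\sup_{t\in[0,t_1]}\|W^{(k-1)}(t)\|_{H^1}^{2},
\end{equation*}
so shrinking $t_1$ once more gives a contraction in $C([0,t_1];H^1)$. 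Consequently $\{w^{(k)}\}$ is Cauchy in that space and converges to some $w$ strongly; the uniform $H^2$ bound supplies a weak-$*$ limit lying in $X_{2M}(0,t_1)$ that solves \eqref{1.21}, and uniqueness follows by applying the same contraction estimate to any two candidate solutions.

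The principal technical obstacle I anticipate is the careful handling of boundary contributions at $r=r_0$: since only $w_r$ (and not $w$) vanishes there, the lowest-order trace $w(t,r_0)^2$ produced by the transport term must be controlled by the trace inequality and absorbed into the dissipation, and it is crucial to exploit $w_{tr}(t,r_0)=0$ in order to annihilate the delicate $[w_{tr}w_{rr}]_{r=r_0}$ term appearing in the top-order estimate. A secondary nuisance is keeping the constants arising from $\psi_r$ and $\psi_{rr}$, which are bounded via \eqref{1.18} and the ODE \eqref{1.15}, explicitly independent of $k$, so that $t_1$ genuinely depends only on $M$ (and the fixed initial data).
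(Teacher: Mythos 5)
The paper offers no proof of Proposition \ref{Prop3.1} beyond the remark that it follows from ``a standard iterative method,'' and your Picard iteration is exactly that method: linearize the source $-\tfrac12 w_r^2$ at the previous iterate, obtain uniform $H^2$ bounds on a short interval via three successive energy estimates, and contract in the weaker norm $C([0,t_1];H^1)$. Your handling of the boundary is also the right one: since only $w_r$ vanishes at $r=r_0$, the trace $w(t,r_0)^2$ from the transport term must be absorbed through $|f(r_0)|^2\le 2\|f\|\,\|f_r\|$, and the top-order boundary term is killed by $w_{tr}(t,r_0)=\partial_t\bigl(w_r(t,r_0)\bigr)=0$.

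There is, however, one genuine gap in the step that places the limit in $X_{2M}(0,t_1)$. You close the induction with $\|w^{(k+1)}\|_{C([0,t_1];H^2)}\le K$, where $K$ is pegged to $\|w_0\|_{H^2}$, and then assert that $H^1\hookrightarrow L^\infty$ ``gives $\|w^{(k+1)}\|_{L^\infty}\le 2M$.'' That inference fails: Sobolev embedding only yields $\|w^{(k+1)}\|_{L^\infty}\le CK$, and $K$ bears no relation to $M$, which bounds merely $\|w_0\|_{L^\infty}$. The correct route is continuity in time from the data: from the equation and the uniform $H^2$ bound one has $\|w^{(k+1)}(t)-w_0\|\le C(K)\,t$ and $\|w^{(k+1)}_r(t)-w_{0r}\|\le C(K)$, so the interpolation inequality gives $\|w^{(k+1)}(t)-w_0\|_{L^\infty}\le C(K)\,t^{1/2}\le M$ for $t_1$ small, whence $\|w^{(k+1)}(t)\|_{L^\infty}\le 2M$. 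After this repair $t_1$ depends on $\|w_0\|_{H^2}$ and not only on $M$, which is weaker than the literal statement of the proposition; this discrepancy is harmless for the continuation argument in Section 3, because Proposition \ref{Prop3.2} controls $\|w(t)\|_{H^2}$ uniformly in $t$ as well, but you should either state the dependence honestly or explain why it does not matter.
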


Proposition \ref{Prop3.1} can be proved by a standard iterative method, so we omit the details for brevity.

Suppose that the local solution $w(t,r)$ constructed in Proposition \ref{Prop3.1} has been extended to the time step $t=T\geq t_1$ and satisfies $w(t,r)\in X_N(0,T)$ for some positive constant $N\geq 2M$, our second result is concerned with the desired $H^2-${\it a priori} estimates on such a local solution $w(t,r)$ based on the following {\it a priori} assumption
\begin{equation}\label{3.1}
|w(t,r)|\leq \mu,\quad (t,r)\in[0,T)\times[r_0,+\infty).
\end{equation}
\begin{proposition}[A Priori Estimates]\label{Prop3.2}
  Suppose that $w(t,r)\in X_{N}(0,T)$ is a solution of (\ref{1.21}) which satisfies the {\it a priori} assumption \eqref{3.1}, then for each $0\leq t<T$, we can get that
\begin{equation}\label{3.2}
\|w(t)\|_\chi^2+\mu\int_0^t\left(\left\|\frac{w(\tau)}{r}\right\|^2+\|w_r(\tau)\|_\chi^2+\frac 1{r_0}w^2(\tau,r_0)\right)d\tau\leq \|w_0\|_\chi^2,
\end{equation}
\begin{equation}\label{3.3}
  \|w_r(t)\|^2+\mu\int_0^t\|w_{rr}(\tau)\|^2d\tau\leq\|w_{0r}\|^2+\frac{v_+^2}{\mu^2C_L} \|w_0\|^2_\chi,
\end{equation}
and
\begin{equation}\label{3.4}
\left\|w_{rr}(t)\right\|^2+\mu\int_0^\infty\left\|w_{rrr}(\tau)\right\|^2d\tau\lesssim 1+\left\|w_{0}\right\|^2_{H^2}.
\end{equation}
\end{proposition}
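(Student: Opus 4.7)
The plan is a cascade of energy estimates of increasing order, in which the weight $\chi$ of \eqref{1.22} is tailored to compensate for the non-monotonicity of $\phi$ and the nonzero boundary data at $r_0$. The crucial fact about $\chi$ is that it satisfies the first-order ODE
\begin{equation*}
\mu\chi_r+\psi\chi=\mu\Bigl(\tfrac{1}{r}-\tfrac{2}{r_0}\Bigr),
\end{equation*}
whose differentiated form gives $(\psi\chi)_r+\mu\chi_{rr}=-\mu/r^2$, and whose value at $r_0$ reads $V_-\chi(r_0)+\mu\chi_r(r_0)=-\mu/r_0$; these are precisely the identities that will produce the $\|w/r\|^2$ interior term and the $w^2(t,r_0)/r_0$ boundary term in \eqref{3.2}.

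For \eqref{3.2} I would multiply $\eqref{1.21}_1$ by $\chi w$ and integrate over $[r_0,\infty)$. The time term yields $\tfrac{1}{2}\tfrac{d}{dt}\|w\|_\chi^2$. Integrating by parts on $\int \chi\psi w\,w_r\,dr$ and $-\mu\int \chi w\,w_{rr}\,dr$, using $w_r(t,r_0)=0$ and decay at infinity, collects the interior leftovers into $-\tfrac{1}{2}\int[(\psi\chi)_r+\mu\chi_{rr}]w^2\,dr$ and the $r_0$-boundary leftovers into $-\tfrac{1}{2}[V_-\chi(r_0)+\mu\chi_r(r_0)]w^2(t,r_0)$; substituting the two identities above converts these into $\tfrac{\mu}{2}\|w/r\|^2$ and $\tfrac{\mu}{2r_0}w^2(t,r_0)$ respectively. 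The cubic remainder $-\tfrac{1}{2}\int \chi w\,w_r^2\,dr$ is absorbed by one half of $\mu\|w_r\|_\chi^2$ using the \textit{a priori} bound $|w|\leq\mu$ from \eqref{3.1}, and a time integration delivers \eqref{3.2}.

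For \eqref{3.3} I would multiply $\eqref{1.21}_1$ by $-w_{rr}$. Since $w_r$ vanishes at both endpoints, the cubic term simplifies dramatically: $-\tfrac{1}{2}\int w_r^2 w_{rr}\,dr=-\tfrac{1}{6}[w_r^3]_{r_0}^\infty=0$, and the convective term integrates by parts to $\tfrac{1}{2}\int \psi_r w_r^2\,dr$ with no boundary contribution, leaving
\begin{equation*}
\tfrac{1}{2}\tfrac{d}{dt}\|w_r\|^2+\mu\|w_{rr}\|^2=-\tfrac{1}{2}\int \psi_r w_r^2\,dr.
\end{equation*}
From the ODE for $\psi$ one has $-\psi_r=(v_+^2-\psi^2-\mu^2/r^2)/(2\mu)\leq v_+^2/(2\mu)$, hence the right-hand side is dominated by $\tfrac{v_+^2}{4\mu C_L}\|w_r\|_\chi^2$ after invoking $\chi\geq C_L$. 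Integrating in time and using the dissipation bound $\mu\int_0^t\|w_r\|_\chi^2\,d\tau\leq\|w_0\|_\chi^2$ supplied by \eqref{3.2} closes \eqref{3.3}.

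Estimate \eqref{3.4} is the most delicate. I would differentiate $\eqref{1.21}_1$ once in $r$, multiply by $-w_{rrr}$, and integrate; using $w_{tr}(t,r_0)=\partial_t w_r(t,r_0)=0$ puts $\tfrac{1}{2}\tfrac{d}{dt}\|w_{rr}\|^2+\mu\|w_{rrr}\|^2$ on the left and leaves three classes of terms on the right: linear-in-$\psi$ contributions such as $\int\psi_r w_{rr}^2\,dr$ and $\int\psi_{rr}w_r w_{rr}\,dr$, controlled by the uniform $\psi$-bounds from Theorem~\ref{Thm1.1} together with \eqref{3.2}--\eqref{3.3}; the interior cubic $\int w_{rr}^3\,dr$, handled via the Gagliardo--Nirenberg inequality $\|w_{rr}\|_{L^3}^3\lesssim\|w_{rr}\|^{5/2}\|w_{rrr}\|^{1/2}$ and Young's inequality, absorbing a small fraction of $\mu\|w_{rrr}\|^2$; and a residual boundary term $\psi(r_0)w_{rr}^2(t,r_0)$, for which I would use the trace-type bound $w_{rr}^2(r_0)\lesssim\|w_{rr}\|\|w_{rrr}\|+\|w_{rr}\|^2$ followed again by Young. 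The main obstacle will be this last boundary contribution: since $V_-$ need not have a fixed sign (it may be positive or negative depending on where it lies in $(-\infty,a_*)$), it cannot simply be dropped, and the interpolation must be tuned tightly enough that Gronwall closes using the time-integrability of $\|w_{rr}\|^2$ supplied by \eqref{3.3}.
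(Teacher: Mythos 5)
Your proposal is correct and follows essentially the same route as the paper: a weighted $L^2$ estimate using exactly the identities $\mu\chi_r+\psi\chi=\mu\left(\tfrac1r-\tfrac2{r_0}\right)$ and $V_-\chi(r_0)+\mu\chi_r(r_0)=-\tfrac{\mu}{r_0}$ encoded in the paper's divergence-form identity \eqref{3.5}, absorption of the cubic term by the \emph{a priori} bound \eqref{3.1}, and then first- and second-order estimates closed by the lower-order dissipation, the ODE bound $\tfrac{\psi^2}{2\mu}-\psi_r\leq\tfrac{v_+^2}{2\mu}$, and a trace/Young treatment of the $r_0$-boundary term via the equation evaluated at $r=r_0$. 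Your choice of multipliers $-w_{rr}$ and $-w_{rrr}$ instead of differentiating and multiplying by $w_r$ and $w_{rr}$ differs from the paper only by an integration by parts (and makes your list of residual terms in the \eqref{3.4} step slightly inconsistent --- e.g.\ $\int w_{rr}^3\,dr$ arises in the paper's formulation, not yours --- but every term that actually appears is controlled by the techniques you name).
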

\begin{proof} We first prove \eqref{3.2}. To this end, as in \cite{Yang-Zhao-Zhao-2019}, we can get by multiplying $(\ref{1.21})_1$ by $\chi w$ and the definition of the weight $\chi(r)$ that
\begin{equation}\label{3.5}
\left(\frac 12\chi w^2\right)_t+\frac\mu2\frac{w^2}{r^2}+\mu\chi w_r^2 -\mu\left(\chi ww_r+\left(\frac1{r_0}-\frac1{2r}\right)w^2\right)_r=-\frac 12\chi ww_r^2.
\end{equation}

Integrating the above identity with respect to $r$ over $(r_0,\infty)$ and noticing that the {\it a priori} assumption \eqref{3.1} tells us that
\begin{equation*}
    -\frac 12\int_{r_0}^\infty\chi ww^2_rdr\leq \frac12\|w\|_{L^\infty}\int_{r_0}^\infty\chi w^2_rdr
    <\leq\frac\mu 2\int_{r_0}^\infty\chi w^2_rdr,
\end{equation*}
we obtain
  \begin{equation*}
    \frac 12\frac {d}{dt}\|w(t)\|_\chi^2 +\frac{\mu}2\left\|\frac{w(t)}{r}\right\|^2+\frac\mu2\|w_r(t)\|_\chi^2+\frac \mu{2r_0}w^2(t,r_0)\leq 0.
  \end{equation*}
Thus \eqref{3.2} follows immediately by integrating the above inequality with respect to $t$.

Now we turn to prove \eqref{3.3}. For this purpose, differentiating $(\ref{1.21})_1$ with respect to $r$ once and multiplying the resulting identity by $w_r,$ we can obtain from \eqref{1.15}$_1$ that
\begin{eqnarray*}
\left(\frac12 w_r^2\right)_t-\mu w_rw_{rrr}&=&-\psi w_rw_{rr}-\left(\frac13w_r^3\right)_r-\psi_rw_r^2\nonumber\\
  &\leq&\frac\mu2w_{rr}^2+\left(\frac{\psi^2}{2\mu}-\psi_r\right)w_r^2 -\left(\frac13w_r^3\right)_r\\
  &\leq&\frac\mu2w_{rr}^2+\frac{v_+^2}{2\mu}w_r^2-\left(\frac13w_r^3\right)_r.\nonumber
\end{eqnarray*}

Integrating the above inequality with respect to $r$ and $t$  over $[r_0,\infty)$ and $[0,t],$ we obtain
\begin{equation}\label{3.6}
   \|w_r(t)\|^2+\mu\int_0^t\|w_{rr}(\tau)\|^2d\tau\leq\|w_{0r}\|^2+\frac{v_+^2}{\mu}\int_0^t\|w_r(\tau)\|^2d\tau.
\end{equation}
On the other hand, \eqref{3.2} implies that
\begin{equation}\label{3.7}
  \int_0^t\|w_r(\tau)\|^2d\tau\leq\frac1{\mu C_L}\|w_0\|^2_\chi,
\end{equation}
from which and \eqref{3.6}, we can deduce \eqref{3.3} immediately.

Finally, we deduce the second-order energy type estimates. To do so, differentiating $(\ref{1.21})_1$ with respect to $r$ twice yields
  \begin{equation}\label{3.8}
    w_{rrt}-\mu w_{rrrr}=-w_{rr}^2-w_rw_{rrr}-\psi_{rr}w_r-2\psi_rw_{rr}-\psi w_{rrr}.
  \end{equation}
Multiplying (\ref{3.8}) by $w_{rr}$ and integrating the resulting equation with respect to $r$ over $[r_0,\infty),$ we get that
  \begin{eqnarray}\label{3.9}
    &&\frac 12\frac d{dt}\left\|w_{rr}(t)\right\|^2+\mu\left\|w_{rrr}(t)\right\|^2\nonumber\\
    &\leq& \underbrace{-\mu w_{rr}(t,r_0)w_{rrr}(t,r_0)}_{I_1} \underbrace{-\int_{r_0}^\infty\psi w_{rr}w_{rrr} (t,r)dr}_{I_2}\\
    &&\underbrace{-\int_{r_0}^\infty\left(\psi_{rr}w_rw_{rr} +2\psi_rw^2_{rr}\right)(t,r)dr}_{I_3}\underbrace{-\int_{r_0}^\infty \left( w_{rr}^3+w_rw_{rr}w_{rrr}\right)(t,r)dr}_{I_4}.\nonumber
  \end{eqnarray}

Now we turn to estimate the terms $I_j(j=1,2,3,4)$ in the right hand side of \eqref{3.9}. For $I_1$, if we take $r=r_0$ in (\ref{3.8}), then we can get from the facts $w_{rr}(t,r_0)=0, w_r(t,r_0)=0, \psi(r_0)=V_-$ that
  \begin{equation*}
    w_{rrr}(t,r_0)=\frac{V_-}{\mu}w_{rr}(t,r_0).
  \end{equation*}
Consequently, from the above identity and the Sobolev inequality, $I_1$ can be estimated as
  \begin{eqnarray}\label{3.10}
      I_1&\leq&\left|w_{rr}(t,r_0)w_{rrr}(t,r_0)\right|\leq C\left|w_{rr}(t,r_0)\right|^2\nonumber\\
       &\leq& C\left\|w_{rr}(t)\right\|\left\|w_{rrr}(t)\right\|
      \leq \frac{\mu}4\left\|w_{rrr}(t)\right\|^2+C\left\|w_{rr}(t)\right\|^2.
  \end{eqnarray}
For $I_2$ and $I_3,$ the Young inequality tells us
  \begin{equation}\label{3.11}
     I_2\leq \frac\mu4\left\|w_{rrr}(t)\right\|^2+C\left\|w_{rr}(t)\right\|^2
  \end{equation}
  and
  \begin{equation}\label{3.12}
    I_3\leq C\left(\left\|w_r(t)\right\|^2+\left\|w_{rr}(t)\right\|^2\right).
  \end{equation}
For $I_4$, the Sobolev inequality, the H\"{o}lder inequality and the Young inequality tell us that
\begin{eqnarray}\label{3.13}
  I_4&=&\int_0^\infty w_rw_{rr}w_{rrr}dr\leq\|w_r\|_{L^\infty}\|w_{rr}\|\|w_{rrr}\|\nonumber\\
  &\leq&\frac\mu4\|w_{rrr}\|^2+\frac1\mu\|w_r\|_{L^\infty}^2\|w_{rr}\|^2\leq\frac\mu4\|w_{rrr}\|^2+\frac2\mu\|w_r\|\|w_{rr}\|^3\\
  &\leq&\frac\mu4\|w_{rrr}\|^2+\frac1\mu\left(\|w_r\|^2+\|w_{rr}\|^2\right)\|w_{rr}\|^2.\nonumber
\end{eqnarray}

Substituting the estimates (\ref{3.10})-(\ref{3.13}) into (\ref{3.9}), we have
\begin{equation}\label{3.14}
  \frac{d}{dt}\|w_{rr}(t)\|^2+\mu\|w_{rrr}(t)\|^2\lesssim\|w_r(t)\|^2+\|w_{rr}(t)\|^2+(\|w_r(t)\|^2+\|w_{rr}\|^2)\|w_{rr}\|^2.
\end{equation}

\eqref{3.14}, \eqref{3.2}, \eqref{3.3} together with the Gr\"{o}nwall inequality imply
\eqref{3.4} holds. This completes the proof of Proposition \ref{Prop3.2}.
\end{proof}

Having obtained Proposition \ref{Prop3.1} and Proposition \ref{Prop3.2}, we now turn to prove Theorem \ref{Thm1.2} by the continuation argument. In fact, the assumption \eqref{1.23} imposed on the initial data $w_0(r)$ together with Sobolev's inequality imply that
\begin{equation}\label{3.15}
\|w_0\|_{L^\infty}\leq \frac\mu 2,
\end{equation}
which guarantees the local existence of solution $w(t,r)\in X_\mu(0,T)$ for some $T>0$. On the other hand, from the {\it a priori} estimates \eqref{3.2}, \eqref{3.3} and \eqref{3.4} obtained in Proposition \ref{Prop3.2}, one can get that
\begin{eqnarray}\label{3.16}
  \|w(t)\|_{L^\infty}&\leq& \sqrt2\|w(t)\|^{\frac12}\|w_r(t)\|^{\frac12}\nonumber\\
  &\leq&\sqrt2C_U^{\frac14}C_L^{-\frac14}\|w_0\|^{\frac12}\bigg(\|w_{0rr}\|^2+\frac{C_Uv_+^2}{C_L\mu^2}\|w_0\|^2\bigg)^{\frac14}\\
  &\leq&\frac\mu2\nonumber
\end{eqnarray}
holds for $0\leq t\leq T$ provided that the assumption \eqref{1.23} holds and consequently, by employing Proposition \ref{Prop3.1} again, $w(t,r)$ can be extended to the time interval $t=T+t_1$ and $w(t,r)\in X_\mu(0,T+t_1)$. Repeating the above procedure and by the continuation argument, one can thus extend $w(t,r)$ step by step to a global one. This completes the proof of Theorem \ref{Thm1.2}.

Since Theorem \ref{Thm1.3} can be proved by repeating the argument used in \cite{Yang-Zhao-Zhao-2019} to prove Theorem 2.2 and Theorem 2.3 there, we thus omit for brevity.

\section{Acknowledgements} The work was supported by the Fundamental Research Funds for the Central Universities and two grants from the National Natural Science Foundation of China under contracts 11731008 and 11671309, respectively.

\end{document}